\theoremstyle{remark}
\newtheorem{remark}{Remark}
\theoremstyle{plain}
\newtheorem{theorem}{Theorem}
\newtheorem{proposition}{Proposition}
\newtheorem{lemma}{Lemma}
\newcommand{\eps}{\varepsilon}
\newcommand{\unitcircle}{{{\mathbb T}^1}}
\newcommand{\R}{{\mathbb R}}
\newcommand{\Z}{{\mathbb Z}}
\newcommand{\DR}{{\rm D}}
\newcommand{\Cr}{{\rm Cr}}
\newcommand{\Dist}{{\rm Dist}}
\newcommand{\Obig}{{\mathcal O}}
\title{Herman's Theory Revisited}
\author{
    K.~Khanin\thanks{Department of Mathematics, University of Toronto
    }
    \and A.~Teplinsky\thanks{Institute of Mathematics, Kiev, Ukraine
    }
    }
\begin{document}

\maketitle
\begin{abstract}
We prove that a $C^{2+\alpha}$-smooth orientation-preserving circle diffeomorphism with rotation number in
Diophantine class $D_\delta$, $0<\delta<\alpha\le1$, is $C^{1+\alpha-\delta}$-smoothly conjugate to a rigid
rotation. We also derive the most precise version of Denjoy's inequality for such diffeomorphisms.
\end{abstract}

\section{Introduction}

An irrational number $\rho$ is said to belong to Diophantine class $D_\delta$ if there exists a constant $C>0$
such that $|\rho-p/q|\ge Cq^{-2-\delta}$ for any rational number $p/q$. The aim of this short note is to present
the new and complete proof of the following
\begin{theorem} Let $T$ be a $C^{2+\alpha}$-smooth orientation-preserving circle
diffeomorphism with rotation number $\rho\in D_\delta$, $0<\delta<\alpha\le1$. Then $T$ is
$C^{1+\alpha-\delta}$-smoothly conjugate to the rigid rotation by angle $\rho$.
\end{theorem}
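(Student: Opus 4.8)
The plan is to reduce the theorem to sharp control of the geometry of the \emph{dynamical partitions} determined by the continued-fraction convergents $p_n/q_n$ of $\rho$. Fix a base point $x_0\in\unitcircle$ and let $I_n=[x_0,T^{q_n}x_0)$; the partition $\mathcal P_n$ of the circle consists of the ``long'' intervals $T^iI_{n-1}$ for $0\le i<q_n$ together with the ``short'' intervals $T^jI_n$ for $0\le j<q_{n-1}$. Because $T$ is $C^2$ its nonlinearity $\log T'$ has bounded variation, so Denjoy's theorem applies and furnishes a homeomorphism $h$ with $h\circ T=R_\rho\circ h$. Wherever it exists, $h'(x)$ is the limit over $n$ of the ratio of the length of the $\mathcal P_n$-interval of $R_\rho$ containing $h(x)$ to that of the $\mathcal P_n$-interval of $T$ containing $x$; since for the rigid rotation all long (resp. short) intervals have exactly equal length, proving the theorem amounts to showing that these ratios converge everywhere and that the limit lies in $C^{\alpha-\delta}$. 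The Diophantine hypothesis enters only through the two-sided control $q_n\le q_{n+1}\le c\,q_n^{1+\delta}$.

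The first block of work is the a priori distortion theory. From bounded variation of $\log T'$ and the disjointness of the iterates $T^iI_{n-1}$, $0\le i<q_n$, one obtains the classical Denjoy bound: the distortion of $T^{q_n}$ on a fundamental interval is uniformly bounded, and consequently the lengths of adjacent intervals of $\mathcal P_n$ are comparable, up to a factor governed by the scale disparity $q_{n+1}/q_n$. I would then sharpen this using the full $C^{2+\alpha}$ regularity, i.e.\ $\log T'\in C^{1+\alpha}$. Expanding $\log T'(T^ix)-\log T'(T^iy)$ to first order, the second-order remainder of $\log(T^{q_n})'$ is bounded by $[(\log T')']_\alpha\sum_{0\le i<q_n}\abs{T^iI_{n-1}}^{1+\alpha}$, and since the $T^iI_{n-1}$ are disjoint with $\max_i\abs{T^iI_{n-1}}\asymp q_n^{-1}$ this sum is $\Obig(q_n^{-\alpha})$. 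This quantitative, scale-decaying distortion estimate is the ``most precise Denjoy inequality'' mentioned in the abstract, and I would isolate it as the main lemma.

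With the distortion lemma in hand, I would convert it into a modulus of continuity for $h'$. Writing $\log h'(x)-\log h'(y)$ as a telescoping series whose $n$-th term compares the logarithms of the lengths of the $\mathcal P_n$-intervals containing $x$ and $y$, the first-order parts are organized into the renormalization nonlinearity and shown to converge, while the remainders contribute a series dominated by $\sum_n q_n^{-\alpha}$, which converges because $q_n$ grows at least geometrically. The delicate point is the exponent: for two points at distance $d$ the series is effectively truncated at the level $n$ whose scale matches $d$, and because $D_\delta$ permits the two scales inside $\mathcal P_n$ to differ by a factor as large as $q_{n+1}/q_n\lesssim q_n^{\delta}$, the coarse-scale distortion $\asymp q_n^{-\alpha}$ translates into a spatial bound no better than $d^{\alpha-\delta}$. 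Summing the geometric-type series then yields $\abs{\log h'(x)-\log h'(y)}\le C\,d^{\alpha-\delta}$, and exponentiating gives $h\in C^{1+\alpha-\delta}$; in the bounded-type limit $\delta\to0$ the argument returns the classical exponent $\alpha$.

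The main obstacle is precisely this last conversion: tracking how much of the $\alpha$-regularity of $DT$ survives the worst-case bunching of orbit points permitted by the class $D_\delta$. One must make the distortion lemma sharp enough, and arrange the telescoping sum carefully enough, that the Diophantine exponent $\delta$ is lost exactly once rather than accumulating across the infinitely many renormalization levels. I would expect the bulk of the effort to go into these uniform estimates across the highly non-uniform intervals of a single $\mathcal P_n$, and into verifying that the short and long scales match up continuously from one level to the next, so that the limiting $h'$ is genuinely $(\alpha-\delta)$-Hölder rather than merely bounded.
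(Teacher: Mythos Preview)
Your outline contains a genuine gap at exactly the point the paper identifies as the historical error. You assert that the sharp Denjoy estimate is $\log(T^{q_n})'=\Obig(q_n^{-\alpha})$, obtained by bounding $\sum_{i<q_n}\abs{T^iI_{n-1}}^{1+\alpha}$ using $\max_i\abs{T^iI_{n-1}}\asymp q_n^{-1}$. Two things fail here. First, $\max_i\abs{T^iI_{n-1}}\asymp q_n^{-1}$ is equivalent to $l_{n-1}\asymp\Delta_{n-1}$, which is a \emph{consequence} of $C^1$-conjugacy (see the line after Proposition~\ref{prop:C^1}), not something available a priori; your argument is circular. Second, and more seriously, even at the level of the $l$'s the bound $(T^{q_n})'=1+\Obig(l_{n-1}^\alpha)$ is \emph{false}: Remark~\ref{rem:SKerror} of the paper points out that precisely this claim was the error in~\cite{SK}. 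When $l_{n-1}$ is small relative to $l_{n-2}$ but comparable to $l_n$, the term $\tfrac{l_n}{l_{n-1}}l_{n-2}^\alpha$ in the correct $\eps_n$ of Proposition~\ref{prop:Denjoy} dominates $l_{n-1}^\alpha$. Your ``second-order remainder'' calculation is fine, but the ``first-order parts'' you wave away as ``organized into the renormalization nonlinearity'' are the entire difficulty, and they are \emph{not} $\Obig(l_{n-1}^\alpha)$.

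The paper's substance lies exactly in filling this gap. The sharp Denjoy inequality (Proposition~\ref{prop:Denjoy}) is proved not by Taylor expanding $\log T'$ but by controlling the \emph{cross-ratio} distortion (which kills the first-order term automatically, Proposition~\ref{prop:Dist}) and then exploiting three exact algebraic identities (\ref{eq:observ1})--(\ref{eq:observ3}) among the ratio distortions $M_n,K_n$ to derive a recursion whose solution is the weighted sum $\eps_n$. Because the resulting $\eps_n$ is a priori only $\Obig(\lambda^{\alpha n})$ and $k_{n+1}$ is unbounded, a separate bootstrap argument (Proposition~\ref{prop:k_n+1eps_n}) is required to show $\sum_n k_{n+1}\eps_n<\infty$ \emph{before} one knows $l_n\asymp\Delta_n$; this argument compares $l_{n_i}$ to $\Delta_{n_i}$ along a hypothetical bad subsequence and derives a contradiction from the Diophantine condition. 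Only after $C^1$-smoothness is established does one get $l_n\sim\Delta_n$ and the clean estimate $\eps_n=\Obig(\Delta_n^{\alpha/(1+\delta)})$ (Lemma~\ref{lemma:eps_n}), from which the $C^{\alpha-\delta}$ H\"older bound follows. Your proposal has no analogue of either the cross-ratio mechanism or the bootstrap, and the single estimate you do isolate is the one the paper is written to correct.
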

\noindent (Note, that $C^{2+\alpha}$ with $\alpha=1$ throughout this paper means $C^{2+\text{Lip}}$ rather than
$C^3$.)

This result was first announced in  \cite{SK}. However, the complete proof was never published. Moreover, the argument in \cite{SK} contained a wrong intermediate estimate.

The first global results on smoothness of conjugation with rotations were obtained by M.~Herman \cite{H}. Later
J.-C.~Yoccoz extended the theory to the case of Diophantine rotation numbers \cite{Y}. The case of
$C^{2+\alpha}$-smooth diffeomorphisms was considered by K.~Khanin, Ya.~Sinai \cite{KS,SK} and Y.~Katznelson,
D.~Ornstein \cite{KO1,KO2}.


In the present paper we use a conceptually new approach, which considerably simplifies the proof. We also believe
that this approach will prove useful in other problems involving circle diffeomorphisms.

Let us remark that our result is stronger than the statement proven in \cite{KO1}, although their scope is wider
(namely, we do not consider smoothness higher than $C^3$). It is also sharp, i.e. smoothness of conjugacy higher
than $C^{1+\alpha-\delta}$ cannot be achieved in general settings, as it follows from the examples constructed in
\cite{KO1}.

The paper is self-consistent although requires good understanding of combinatorics of circle homeomorphisms
and Denjoy's theory, for which we refer a reader to the book \cite{S-book}.

\section{Cross-ratio tools}

The {\em cross-ratio} of four pairwise distinct points $x_1, x_2, x_3, x_4$ is
$$
\Cr(x_1,x_2,x_3,x_4)=\frac{(x_1-x_2)(x_3-x_4)}{(x_2-x_3)(x_4-x_1)}
$$
Their {\em cross-ratio distortion} with respect to a strictly increasing function $f$ is
$$
\Dist(x_1,x_2,x_3,x_4;f)=\frac{\Cr(f(x_1),f(x_2),f(x_3),f(x_4))}{\Cr(x_1,x_2,x_3,x_4)}
$$
Clearly,
\begin{equation}\label{eq:Dist=DRtimesDR}
\Dist(x_1,x_2,x_3,x_4;f)=\frac{\DR(x_1,x_2,x_3;f)}{\DR(x_1,x_4,x_3;f)},
\end{equation}
where
$$
D(x_1,x_2,x_3;f)=\frac{f(x_1)-f(x_2)}{x_1-x_2}:\frac{f(x_2)-f(x_3)}{x_2-x_3}
$$
is the {\em ratio distortion} of three distinct points $x_1,x_2,x_3$ with respect to $f$.

In the case of smooth $f$ such that $f'$ does not vanish, both the ratio distortion and the cross-ratio distortion
are defined for points, which are not necessarily pairwise distinct, as the appropriate limits (or, just by
formally replacing ratios $(f(a)-f(a))/(a-a)$ with $f'(a)$ in the definitions above).

Notice that both ratio and cross-ratio distortions are multiplicative with respect to composition: for two
functions $f$ and $g$ we have
\begin{equation}\label{eq:mult-DR}
D(x_1,x_2,x_3;f\circ g)=D(x_1,x_2,x_3;g)\cdot D(g(x_1),g(x_2),g(x_3);f)
\end{equation}
\begin{equation}\label{eq:mult-Dist}
\Dist(x_1,x_2,x_3,x_4;f\circ g)=\Dist(x_1,x_2,x_3,x_4;g)\cdot\Dist(g(x_1),g(x_2),g(x_3),g(x_4);f)
\end{equation}

\begin{proposition}\label{prop:DR}
Let $f\in C^{2+\alpha}$, $\alpha\in[0,1]$, and $f'>0$ on $[A,B]$. Then for any $x_1, x_2, x_3\in[A,B]$ the
following estimate holds:
\begin{equation}\label{eq:DR}
\DR(x_1,x_2,x_3;f)=1+(x_1-x_3)\left(\frac{f''}{2f'}+\Obig(\Delta^\alpha)\right),
\end{equation}
where $\Delta=\max\{x_1,x_2,x_3\}-\min\{x_1,x_2,x_3\}$, and the values of both $f''$ and $f'$ can be taken at any
points between $\min\{x_1,x_2,x_3\}$ and $\max\{x_1,x_2,x_3\}$.
\end{proposition}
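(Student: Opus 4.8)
The plan is to make the factor $(x_1-x_3)$ explicit from the outset, since it cannot be recovered by a naive two-sided Taylor expansion of the two difference quotients. Writing $g_{12}=\frac{f(x_1)-f(x_2)}{x_1-x_2}$ and $g_{23}=\frac{f(x_2)-f(x_3)}{x_2-x_3}$, so that $\DR(x_1,x_2,x_3;f)=g_{12}/g_{23}$, I would first record the elementary identity
\begin{equation*}
\DR(x_1,x_2,x_3;f)-1=\frac{g_{12}-g_{23}}{g_{23}}=(x_1-x_3)\,\frac{\Phi}{g_{23}},
\end{equation*}
where $\Phi=\frac{1}{x_1-x_3}(g_{12}-g_{23})$ is precisely the second-order divided difference of $f$ at $x_1,x_2,x_3$. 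This is the crucial structural observation: it exhibits the prefactor $(x_1-x_3)$ exactly (consistent with the fact that $\DR\equiv1$ whenever $x_1=x_3$) and reduces the whole estimate to controlling the first and second divided differences separately.

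Next I would estimate the two divided differences using the $C^{2+\alpha}$ hypothesis. For the denominator, the mean value theorem gives $g_{23}=f'(\xi)$ for some $\xi$ between $x_2$ and $x_3$, and since $f'$ is Lipschitz we obtain $g_{23}=f'(c)+\Obig(\Delta)$ for any reference point $c$ between $\min\{x_1,x_2,x_3\}$ and $\max\{x_1,x_2,x_3\}$; positivity of $f'$ on the compact interval $[A,B]$ keeps $g_{23}$ bounded away from zero. For the numerator I would use the Hermite--Genocchi representation $\Phi=\int_{S}f''(t_1x_1+t_2x_2+t_3x_3)\,dt$, where $S=\{t_i\ge0,\ t_1+t_2+t_3=1\}$ carries total mass $\tfrac12$. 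Replacing $f''$ at the convex combination by its value at $c$ and invoking the $\alpha$-H\"older continuity of $f''$ (exactly the regularity supplied by $C^{2+\alpha}$) yields $\Phi=\tfrac12 f''(c)+\Obig(\Delta^\alpha)$. Equivalently one may use the mean value form $\Phi=\tfrac12 f''(\eta)$ and H\"older-estimate $f''(\eta)-f''(c)$.

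Finally I would combine the two estimates. Since $g_{23}=f'(c)\bigl(1+\Obig(\Delta)\bigr)$ with $f'(c)$ bounded below, division gives
\begin{equation*}
\frac{\Phi}{g_{23}}=\frac{\tfrac12 f''(c)+\Obig(\Delta^\alpha)}{f'(c)}\bigl(1+\Obig(\Delta)\bigr)=\frac{f''(c)}{2f'(c)}+\Obig(\Delta^\alpha),
\end{equation*}
where the leftover terms are of order $\Obig(\Delta)$ and $\Obig(\Delta^{1+\alpha})$ and are absorbed, using $\Obig(\Delta)\subset\Obig(\Delta^\alpha)$ because $\alpha\le1$. Substituting into the identity of the first paragraph yields \eqref{eq:DR}; the freedom to evaluate $f'$ and $f''$ at any point of the interval is immediate, since moving $c$ perturbs $f''(c)$ by $\Obig(\Delta^\alpha)$ and $f'(c)$ by $\Obig(\Delta)$, both within the stated error. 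The case of coinciding points follows by passing to the limit, the divided differences tending to the corresponding derivatives. The only genuine obstacle is the first step, namely extracting $(x_1-x_3)$ honestly: a direct expansion of $g_{12}$ and $g_{23}$ about a common center produces errors of size $\Obig(\Delta^{1+\alpha})$ with no visible $(x_1-x_3)$ factor, which is too weak precisely when $x_1-x_3$ is small compared with $\Delta$. The divided-difference identity is what removes this difficulty.
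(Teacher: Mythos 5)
Your proof is correct, and it reaches the estimate by a more unified route than the paper. Both arguments rest on the same structural fact --- that the difference of the two first-order ratios, $g_{12}-g_{23}$ in your notation, carries an exact factor of $(x_1-x_3)$ multiplying a second-order divided difference --- but the paper extracts this factor by splitting into three cases according to which of the points lies between the other two, estimating $g_{12}-g_{23}$ directly in Case 1 and using an ad hoc algebraic rearrangement in Cases 2 and 3. Your divided-difference identity $[x_1,x_2]f-[x_2,x_3]f=(x_1-x_3)[x_1,x_2,x_3]f$, valid for any configuration of distinct nodes, together with the Hermite--Genocchi representation (equivalently, the mean value form $[x_1,x_2,x_3]f=\tfrac12 f''(\eta)$ with $\eta$ in the convex hull of the nodes), collapses the three cases into one and makes the $\Obig(\Delta^\alpha)$ error transparent from the $\alpha$-H\"older continuity of $f''$. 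The remaining steps --- $g_{23}=f'(c)+\Obig(\Delta)$ from the mean value theorem and the Lipschitz bound on $f'$, the lower bound on $g_{23}$ from positivity and compactness, the absorption $\Obig(\Delta)\subset\Obig(\Delta^\alpha)$ for $\alpha\le1$, and the observation that moving the evaluation point perturbs $f''$ by $\Obig(\Delta^\alpha)$ and $f'$ by $\Obig(\Delta)$ --- coincide with the paper's. What your approach buys is uniformity over all orderings of the points and an automatic treatment of coinciding nodes (the Hermite--Genocchi integral is continuous in the nodes); what the paper's buys is that it uses nothing beyond Taylor's formula. You are also right to flag that a naive expansion of the two ratios about a common center only yields an error $\Obig(\Delta^{1+\alpha})$ with no visible $(x_1-x_3)$ prefactor, which is exactly why the exact extraction of $(x_1-x_3)$ --- by case analysis in the paper, by the divided-difference recursion in your version --- is the essential point.
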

\begin{proof}
First of all, it is easy to see why the arguments of $f''$ and $f'$ in the estimate (\ref{eq:DR}) be taken
arbitrarily: $f''(\theta_1)-f''(\theta_2)=\Obig(\Delta^\alpha)$, $f'(\theta_1)-f'(\theta_2)=\Obig(\Delta)$, and
$(f'(\theta))^{-1}=\Obig(1)$.

To prove (\ref{eq:DR}), we need to consider three cases of relative locations of the points.

\noindent Case 1: $x_2$ lies between $x_1$ and $x_3$. It is easy to calculate that
$$
\frac{f(x_1)-f(x_2)}{x_1-x_2}-\frac{f(x_2)-f(x_3)}{x_2-x_3}=(x_1-x_3)\left(\frac{1}{2}f''+\Obig(\Delta^{\alpha})\right),
$$
and (\ref{eq:DR}) follows.

\noindent Case 2: $x_1$ lies between $x_2$ and $x_3$. One can check that
$$
D(x_1,x_2,x_3;f)=1+\left[\frac{x_1-x_3}{x_2-x_3}\left(\frac{f(x_2)-f(x_1)}{x_2-x_1}-
\frac{f(x_1)-f(x_3)}{x_1-x_3}\right)\right]: \frac{f(x_2)-f(x_3)}{x_2-x_3}.
$$
The expression in the round brackets equals $(x_2-x_3)(\frac{1}{2}f''+\Obig(\Delta^{\alpha}))$, so in the square
brackets we have $(x_1-x_3)(\frac{1}{2}f''+\Obig(\Delta^{\alpha}))$.

\noindent Case 3: $x_3$ lies between $x_1$ and $x_2$. Similar to Case~2.
\end{proof}

\begin{proposition}\label{prop:Dist}
Let $f\in C^{2+\alpha}$, $\alpha\in[0,1]$, and $f'>0$ on $[A,B]$. For any $x_1, x_2, x_3, x_4\in[A,B]$ the
following estimate holds:
$$
\Dist(x_1,x_2,x_3,x_4;f)=1+(x_1-x_3)\Obig(\Delta^\alpha)
$$
where $\Delta=\max\{x_1,x_2,x_3,x_4\}-\min\{x_1,x_2,x_3,x_4\}$.
\end{proposition}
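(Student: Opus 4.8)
The plan is to reduce everything to Proposition~\ref{prop:DR} through the factorization (\ref{eq:Dist=DRtimesDR}). Writing
$\Dist(x_1,x_2,x_3,x_4;f)=\DR(x_1,x_2,x_3;f)/\DR(x_1,x_4,x_3;f)$, I would expand the numerator and the denominator separately by Proposition~\ref{prop:DR}. The crucial observation is that both ratio distortions share the same outer points $x_1$ and $x_3$ (the first and third arguments in each), so each expands as $1+(x_1-x_3)\bigl(\frac{f''}{2f'}+\Obig(\Delta^\alpha)\bigr)$ with the \emph{same} prefactor $(x_1-x_3)$ and the \emph{same} principal term $\frac{f''}{2f'}$. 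Both intermediate spans $\max\{x_1,x_2,x_3\}-\min\{x_1,x_2,x_3\}$ and $\max\{x_1,x_4,x_3\}-\min\{x_1,x_4,x_3\}$ are at most $\Delta$, so replacing them by $\Delta$ inside the error terms is legitimate.

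The key step is to invoke the freedom granted by Proposition~\ref{prop:DR} to evaluate $f''$ and $f'$ at any point of the relevant range. Since $x_1$ and $x_3$ lie in both ranges, I can choose one common evaluation point (say $x_1$) for both expansions, which makes the two principal terms $(x_1-x_3)\frac{f''}{2f'}$ literally identical. Thus, writing $N=\DR(x_1,x_2,x_3;f)=1+a+\eps_1$ and $D=\DR(x_1,x_4,x_3;f)=1+a+\eps_2$ with $a=(x_1-x_3)\frac{f''}{2f'}$ and $\eps_i=(x_1-x_3)\Obig(\Delta^\alpha)$, the quotient becomes
$$
\frac{N}{D}=1+\frac{\eps_1-\eps_2}{1+a+\eps_2}.
$$

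It then remains to estimate this correction. The numerator is $\eps_1-\eps_2=(x_1-x_3)\Obig(\Delta^\alpha)$. For the denominator, since $f''/2f'$ is bounded on the compact interval $[A,B]$ while $f'$ is bounded away from zero, one has $a=\Obig(\Delta)$ and $\eps_2=\Obig(\Delta^{1+\alpha})$; hence for $\Delta$ small enough $1+a+\eps_2$ stays within, say, $[1/2,3/2]$, bounded away from zero. This yields $N/D=1+(x_1-x_3)\Obig(\Delta^\alpha)$, as claimed.

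The one point to get right — rather than a genuine obstacle — is this exact cancellation of the principal terms: it is essential to use a single common evaluation point for $f''/f'$ in both the numerator and the denominator, so that only the $\Obig(\Delta^\alpha)$ remainders survive in the difference $\eps_1-\eps_2$. Were the principal terms not forced to coincide, the leftover would be of order $(x_1-x_3)$ rather than $(x_1-x_3)\Delta^\alpha$, and the sharper estimate would be lost. Everything else is a routine quotient expansion controlled by the compactness of $[A,B]$ and the positivity of $f'$.
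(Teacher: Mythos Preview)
Your argument is correct and is exactly the approach the paper intends: factor $\Dist$ via (\ref{eq:Dist=DRtimesDR}), apply Proposition~\ref{prop:DR} to numerator and denominator with a common evaluation point so the $(x_1-x_3)\frac{f''}{2f'}$ terms cancel, and divide. One cosmetic point: you need not restrict to ``$\Delta$ small enough'' to bound the denominator away from zero, since $\DR(x_1,x_4,x_3;f)$ is a ratio of difference quotients of $f$ and is therefore uniformly bounded above and below on $[A,B]$ by the positivity of $f'$.
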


\begin{proof}
Follows immediately from Proposition~\ref{prop:DR} due to (\ref{eq:Dist=DRtimesDR}).
\end{proof}

\begin{remark}
While the ratio distortion satisfies an obvious estimate
\begin{equation}\label{eq:log-DR}
\log D(x_1,x_2,x_3;f)=\Obig(x_1-x_3),
\end{equation}
Proposition~\ref{prop:Dist} implies a stronger (for small $\Delta$) estimate for cross-ratio distortion:
\begin{equation}\label{eq:log-Dist}
\log\Dist(x_1,x_2,x_3,x_4;f)=(x_1-x_3)\Obig(\Delta^\alpha)
\end{equation}
\end{remark}

\section{Circle diffeomorphisms}

\subsection{Settings and notations}

For an orientation-preserving homeomorphism $T$ of the unit circle $\unitcircle=\R/\Z$, its {\em rotation number}
$\rho=\rho(T)$ is the value of the limit $\lim_{i\to\infty}L_T^i(x)/i$ for a lift $L_T$ of $T$ from $\unitcircle$
onto $\R$. It is known since Poincare that rotation number is always defined (up to an additive integer) and does
not depend on the starting point $x\in\R$. Rotation number $\rho$ is irrational if and only if $T$ has no periodic
points. We restrict our attention in this paper to this case. The order of points on the circle for any trajectory
$\xi_i=T^i\xi_0$, $i\in\Z$, coincides with the order of points for the {\em rigid rotation}
$$R_{\rho}:\quad\xi\mapsto \xi+\rho\mod 1$$
This fact is sometimes referred to as the {\em combinatorial equivalence} between $T$ and $R_{\rho}$.

We shall use the {\em continued fraction} expansion for the (irrational) rotation number:
\begin{equation}
\rho=[k_1,k_2,\ldots,k_n,\ldots]=\dfrac{1}{k_1+\dfrac{1}{k_2+
\dfrac{1}{\dfrac{\cdots}{k_n+\dfrac{1}{\cdots}}}}}\in(0,1)\label{eq:cont-frac}
\end{equation}
which, as usual,  is understood as a limit of the sequence of {\em rational convergents} $p_n/q_n=[k_1,k_2,\dots,k_n]$. The positive
integers $k_n$, $n\ge1$, called {\em partial quotients}, are defined uniquely for irrational $\rho$. The mutually
prime positive integers $p_n$ and $q_n$ satisfy the recurrent relation $p_n=k_np_{n-1}+p_{n-2}$,
$q_n=k_nq_{n-1}+q_{n-2}$ for $n\ge1$, where it is convenient to define $p_0=0$, $q_0=1$ and $p_{-1}=1$,
$q_{-1}=0$.

Given a circle homeomorphism $T$ with irrational $\rho$, one may consider a {\em marked trajectory} (i.e. the
trajectory of a marked point) $\xi_i=T^i\xi_0\in\unitcircle$, $i\ge0$, and pick out of it the sequence of the {\em
dynamical convergents} $\xi_{q_n}$, $n\ge0$, indexed by the denominators of the consecutive rational convergents
to $\rho$. We will also conventionally use $\xi_{q_{-\!1}}=\xi_0-1$. The well-understood arithmetical properties
of rational convergents and the combinatorial equivalence between $T$ and $R_\rho$ imply that the dynamical convergents
approach the marked point, alternating their order in the following way:
\begin{equation}
\xi_{q_{\!-1}}<\xi_{q_1}<\xi_{q_3}<\dots<\xi_{q_{2m+1}}<\dots<\xi_0<\dots<\xi_{q_{2m}}<\dots<\xi_{q_2}<\xi_{q_0}
\label{eq:dyn-conv}
\end{equation}
We define the $n$th {\em fundamental segment} $\Delta^{(n)}(\xi)$ as the circle arc $[\xi,T^{q_n}\xi]$ if $n$ is
even and $[T^{q_n}\xi,\xi]$ if $n$ is odd. If there is a marked trajectory, then we use the notations
$\Delta^{(n)}_0=\Delta^{(n)}(\xi_0)$, $\Delta^{(n)}_i=\Delta^{(n)}(\xi_i)=T^i\Delta^{(n)}_0$. What is important
for us about the combinatorics of trajectories can be formulated as the following simple

\begin{lemma}\label{lemma:disjoint} For any $\xi\in\unitcircle$ and $0<i<q_{n+1}$ the segments
$\Delta^{(n)}(\xi)$ and $\Delta^{(n)}(T^i\xi)$ are disjoint (except at the endpoints).
\end{lemma}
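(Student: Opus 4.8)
The plan is to reduce the whole statement to the rigid rotation $R_\rho$ and then to invoke the classical best-approximation property of continued fractions. First I would note that $\Delta^{(n)}(T^i\xi)=T^i\Delta^{(n)}(\xi)$, so the lemma concerns the two arcs spanned by the four orbit points $\xi,\ T^{q_n}\xi,\ T^i\xi,\ T^{i+q_n}\xi$. Whether two such arcs (each taken with the orientation fixed by the definition of the fundamental segment) have overlapping interiors depends only on the cyclic order of these four points on $\unitcircle$. By the combinatorial equivalence between $T$ and $R_\rho$ this cyclic order is identical to the one for the corresponding points of the rotation orbit, so it suffices to prove the lemma for $T=R_\rho$.

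Second, for the rotation I would pass to coordinates. Writing $\beta_n=\min_{p\in\Z}|q_n\rho-p|=|q_n\rho-p_n|$, one has $q_n\rho-p_n=(-1)^n\beta_n$, so $\Delta^{(n)}(\xi)$ is precisely the arc of length $\beta_n$ with $\xi$ as an endpoint, lying to the right of $\xi$ for even $n$ and to the left for odd $n$; then $\Delta^{(n)}(R_\rho^i\xi)$ is the same arc translated by $i\rho$. Two arcs of the common length $\beta_n$ whose initial endpoints differ by $i\rho$ have disjoint interiors if and only if the distance from $i\rho$ to the nearest integer is at least $\beta_n$, the boundary case of equality producing arcs that share a single endpoint (exactly what happens at $i=q_n$). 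Thus the lemma for $R_\rho$ reduces to the arithmetic inequality
\[
\min_{p\in\Z}|i\rho-p|\ \ge\ \beta_n\qquad\text{for all }0<i<q_{n+1},
\]
with equality only at $i=q_n$.

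Finally, this is the law of best approximation, which I would establish directly. Choose $p$ to be the integer nearest to $i\rho$ and expand the pair $(i,p)$ in the basis $\{(q_n,p_n),(q_{n+1},p_{n+1})\}$, writing $(i,p)=a\,(q_n,p_n)+b\,(q_{n+1},p_{n+1})$ with $a,b\in\Z$; the expansion has integer coefficients because $p_{n+1}q_n-p_nq_{n+1}=(-1)^n$. The constraint $0<i<q_{n+1}$ forces either $b=0$ and $a\ge1$, or else $a$ and $b$ nonzero of opposite signs. In the first case $|i\rho-p|=a\beta_n\ge\beta_n$; in the second, $i\rho-p=(-1)^n(a\beta_n-b\beta_{n+1})$ with $a\beta_n$ and $-b\beta_{n+1}$ of the same sign, so $|i\rho-p|=|a|\beta_n+|b|\beta_{n+1}\ge\beta_n$. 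Equality holds only when $a=1$, $b=0$, i.e.\ $i=q_n$, which completes the reduction.

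The main obstacle I anticipate is bookkeeping rather than conceptual: one must pin down the orientation and parity conventions so that the ``disjoint except at the endpoints'' conclusion lines up with the equality case $i=q_n$ of the arithmetic inequality, and one must justify carefully that overlap of the arcs is genuinely a cyclic-order invariant, so that the reduction to $R_\rho$ is legitimate. The arithmetic inequality itself is classical, but the sign analysis in the unimodular expansion is precisely where the hypothesis $i<q_{n+1}$ enters and has to be handled with care.
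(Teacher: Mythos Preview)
Your proposal is correct and follows exactly the route the paper takes: reduce to $R_\rho$ via combinatorial equivalence, then invoke the best-approximation inequality $\|i\rho\|\ge\Delta_n$ for $0<i<q_{n+1}$ with equality only at $i=q_n$. The paper simply quotes this arithmetic fact, whereas you supply a proof via the unimodular basis $(q_n,p_n),(q_{n+1},p_{n+1})$; that extra detail is fine and standard.
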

\begin{proof}
Follows from the combinatorial equivalence of $T$ to $R_{\rho}$ and the following arithmetical fact: the distance
from $i\rho$ to the closest integer is not less than $\Delta_n$ for $0<i<q_{n+1}$ (and equals $\Delta_n$ only for
$i=q_n$, in which case $\Delta^{(n)}(\xi)$ and $\Delta^{(n)}(T^i\xi)$ have a common endpoint $T^{q_n}\xi$).
\end{proof}
In particular, for any $\xi_0$ all the segments $\Delta^{(n)}_i$, $0\le i<q_{n+1}$, are disjoint.

Let us denote $l_{n}=l_n(T)=\max_{\xi}|\Delta^{(n)}(\xi)|=\|T^{q_n}-\text{Id}\|_0$ and
$\Delta_n=l_n(R_\rho)=|q_n\rho-p_n|$. Obviously $l_n, \Delta_n \in (0,1)$ for $n\ge0$, while
$l_{-1}=\Delta_{-1}=1$.

\begin{lemma}\label{lemma:l>Delta}
$l_n\ge\Delta_n$.
\end{lemma}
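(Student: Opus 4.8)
The plan is to pass to a lift of $T$ and deduce the inequality directly from the definition of the rotation number as an orbit average. Fix a lift $L=L_T\colon\R\to\R$ and consider the $1$-periodic, continuous displacement $F_n(x)=L^{q_n}(x)-x-p_n$. My first task is to check the identification $l_n=\max_x\abs{F_n(x)}$. By definition $l_n=\|T^{q_n}-\mathrm{Id}\|_0$ is the maximal length of a fundamental segment $\Delta^{(n)}(\xi)$, which is $[\xi,T^{q_n}\xi]$ for even $n$ and $[T^{q_n}\xi,\xi]$ for odd $n$. Since the convergents satisfy $p_n/q_n<\rho$ for even $n$ and $p_n/q_n>\rho$ for odd $n$, the integer $p_n$ is exactly the one that turns the signed displacement of $L^{q_n}$ into the (oriented) arc length of the fundamental segment, consistently with the ordering in (\ref{eq:dyn-conv}); hence $\abs{\Delta^{(n)}(\xi)}=\abs{F_n(x)}$ whenever $\xi$ is the projection of $x$, and taking the maximum gives $l_n=\max_x\abs{F_n(x)}$.

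Next I would record that $F_n$ has constant sign on $\R$. Indeed, $T$ has no periodic points, so $T^{q_n}$ has no fixed points and $F_n$ never vanishes; by continuity its sign is constant, and by the alternation just mentioned this sign equals that of $q_n\rho-p_n$. This is convenient bookkeeping for the identification above, though for the final estimate only the bound $\abs{F_n}\le l_n$ is actually needed.

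The decisive step uses the definition $\rho=\lim_{i\to\infty}L^i(x)/i$ directly. Telescoping yields, for every $m\ge1$,
\begin{equation}\label{eq:telescope}
L^{mq_n}(x)-x=\sum_{j=0}^{m-1}F_n\!\left(L^{jq_n}(x)\right)+m\,p_n .
\end{equation}
Dividing (\ref{eq:telescope}) by $m$ and letting $m\to\infty$, the left-hand side tends to $q_n\rho$, so the averages $\tfrac1m\sum_{j=0}^{m-1}F_n(L^{jq_n}(x))$ converge to $q_n\rho-p_n$. Since each term obeys $\Abs{F_n(L^{jq_n}(x))}\le l_n$, the average stays bounded by $l_n$ in modulus, and therefore so does its limit: $\abs{q_n\rho-p_n}\le l_n$, i.e.\ $\Delta_n\le l_n$, which is the claim.

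I expect the only genuinely fiddly point to be the first step: pinning down that the relevant integer is precisely $p_n$ (and not $p_n\pm1$) and that $\abs{F_n}$ really computes the fundamental-segment length with the orientation convention built into $\Delta^{(n)}$. Once the identification $l_n=\max_x\abs{F_n}$ is secured, the averaging argument is immediate and requires no smoothness of $T$ beyond continuity.
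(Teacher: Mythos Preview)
Your argument is correct and takes a genuinely different route from the paper. The paper integrates the fundamental-segment length against the unique $T$-invariant probability measure $\mu$, identifies $\int_{\unitcircle}|\Delta^{(n)}(\xi)|\,d\mu(\xi)=\Delta_n$ via $\rho(T^{q_n})=(-1)^n\Delta_n$, and concludes that the maximum $l_n$ dominates this mean. You bypass the invariant measure entirely: telescoping and the bare definition of $\rho$ as $\lim_i L^i(x)/i$ force the Ces\`aro averages of $F_n$ along the $L^{q_n}$-orbit to converge to $q_n\rho-p_n$, and the termwise bound $|F_n|\le l_n$ finishes it. Your approach is more elementary---it uses no unique ergodicity and no ergodicity of $T^{q_n}$, and works for any circle homeomorphism with irrational rotation number. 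The paper's approach, in exchange, gives the two-sided statement $\min_\xi|\Delta^{(n)}(\xi)|\le\Delta_n\le l_n$ for free, since $\Delta_n$ is exhibited as an honest average. Your self-flagged ``fiddly point'' (that $p_n$ is the correct integer, so that $|F_n(x)|=|\Delta^{(n)}(\xi)|<1$) is indeed standard combinatorics encoded in (\ref{eq:dyn-conv}); as you note, only the inequality $|F_n|\le l_n$ is actually used.
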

\begin{proof}
Denote by $\mu$ the unique probability invariant measure for $T$. It follows from the ergodicity of $T^{q_n}$ with
respect to $\mu$ that
$$\int_{\unitcircle}(T^{q_n}(\xi)-\xi)d\mu(\xi)=\rho(T^{q_n}) \mod 1$$
Since $\rho(T^{q_n})=\rho(R_{\rho}^{q_n})=(-1)^n\Delta_n \mod 1$, we have
$$\int_{\unitcircle}|\Delta^{(n)}(\xi)|d\mu(\xi)=\Delta_n,$$ which implies the statement of the lemma.
\end{proof}

It is well known that $\Delta_n\sim\frac{1}{q_{n+1}}$, thus the Diophantine properties of $\rho\in D_\delta$ can
be equivalently expressed in the form:
\begin{equation}\label{eq:Dioph}
\Delta_{n-1}^{1+\delta}=\Obig(\Delta_n)
\end{equation}


\subsection{Denjoy's theory}

The following set of statements essentially summarizes the classical Denjoy theory (see \cite{S-book}), which
holds for any orientation-preserving circle diffeomorphism $T\in C^{1+BV}(\unitcircle)$ with irrational rotation
number $\rho$.

{\bf A}. $\log(T^{q_n})'(\xi_0)=\Obig(1)$.

{\bf B}. There exists $\lambda\in(0,1)$ such that $\frac{|\Delta^{(n+m)}_0|}{|\Delta^{(n)}_0|}=\Obig(\lambda^m)$.

{\bf C}. There exists a homeomorphism $\phi$ that conjugates $T$ to $R_\rho$:
\begin{equation}
\phi\circ T\circ\phi^{-1}=R_\rho
\end{equation}

In order to prove Theorem~1 one has to show that $\phi\in C^{1+\alpha-\delta}(\unitcircle)$ and $\phi'>0$.


\subsection{Denjoy-type inequality}
\label{sect:Denjoy}

The aim of this subsection is to prove the following result that does not require any restrictions on the rotation
number of $T$.
\begin{proposition}[Denjoy-type inequality]\label{prop:Denjoy} Let $T$ be a $C^{2+\alpha}$-smooth, $\alpha\in[0,1]$,
orien\-tation-preserving circle diffeomorphism with irrational rotation number. Then
\begin{equation}\label{eq:Denjoy}
(T^{q_n})'(\xi)=1+\Obig(\eps_n),\quad\text{where}\quad
\eps_n=l_{n-1}^\alpha+\frac{l_n}{l_{n-1}}l_{n-2}^\alpha+\frac{l_n}{l_{n-2}}l_{n-3}^\alpha+\dots+\frac{l_n}{l_0}
\end{equation}
\end{proposition}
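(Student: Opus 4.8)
The plan is to prove the estimate uniformly in $\xi$ by induction on $n$, in the equivalent recursive form
\begin{equation*}
\eps_n=l_{n-1}^\alpha+\frac{l_n}{l_{n-1}}\eps_{n-1},
\end{equation*}
whose expansion, with $\eps_0=1$, is exactly the stated closed form (recall $l_{-1}=1$); the base case $(T^{q_0})'=1+\Obig(1)$ is immediate from positivity and continuity of $T'$. Fix $\xi$ and set $f=T^{q_n}$, $g=T^{q_{n-1}}$. The key is to test $f$ against the two \emph{symmetric} neighbours $\eta^+=g\xi$ and $\eta^-=g^{-1}\xi$, which flank $\xi$ at scale $l_{n-1}$. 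Collapsing the first two arguments of the cross-ratio distortion and using (\ref{eq:Dist=DRtimesDR}) together with $\DR(a,a,c;f)=f'(a)(a-c)/(f(a)-f(c))$ gives the exact factorisation
\begin{equation*}
(T^{q_n})'(\xi)=\Dist(\xi,\xi,\eta^+,\eta^-;f)\cdot F,\qquad
F=\DR(\xi,\eta^-,\eta^+;f)\cdot\frac{f(\eta^+)-f(\xi)}{\eta^+-\xi}.
\end{equation*}
The first factor will carry the Hölder (fine-scale) contribution $l_{n-1}^\alpha$, and $F$ the macroscopic slope, to be handled by the inductive hypothesis.

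For the cross-ratio factor I would expand along the orbit by (\ref{eq:mult-Dist}) and bound each factor by (\ref{eq:log-Dist}). The $i$-th term equals $(T^i\xi-T^i\eta^+)\,\Obig(\Delta_i^\alpha)$, where $T^i\xi-T^i\eta^+=-(g\xi_i-\xi_i)$ is, up to a fixed sign, the length $\abs{\Delta^{(n-1)}_i}$, while the span $\Delta_i$ of $g^{-1}\xi_i,\xi_i,g\xi_i$ is at most $\abs{\Delta^{(n-1)}_{i-q_{n-1}}}+\abs{\Delta^{(n-1)}_i}\le 2l_{n-1}$. Hence the $i$-th term is $\abs{\Delta^{(n-1)}_i}\,\Obig(l_{n-1}^\alpha)$, and since the arcs $\Delta^{(n-1)}_i$, $0\le i<q_n=q_{(n-1)+1}$, are pairwise disjoint by Lemma~\ref{lemma:disjoint},
\begin{equation*}
\log\Dist(\xi,\xi,\eta^+,\eta^-;f)=\Obig(l_{n-1}^\alpha)\sum_{i=0}^{q_n-1}\abs{\Delta^{(n-1)}_i}=\Obig(l_{n-1}^\alpha).
\end{equation*}
This is the only step using $C^{2+\alpha}$: the parabolic drift $f''/2f'$ of Proposition~\ref{prop:DR} has already cancelled in the passage to the cross-ratio of Proposition~\ref{prop:Dist}.

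To treat $F$ I would introduce the displacement $h(\zeta)=T^{q_{n-1}}\zeta-\zeta$, of a definite sign, with length-modulus $\abs{\Delta^{(n-1)}(\zeta)}$ and derivative $h'=(T^{q_{n-1}})'-1=\Obig(\eps_{n-1})$ by the inductive hypothesis. Writing $\xi^*=f(\xi)$ and using that $f$ commutes with $g$, substituting the six differences in $F$ by values of $h$ yields
\begin{equation*}
F=\frac{h(\xi^*)\,h(g^{-1}\xi^*)}{h(\xi)\,h(g^{-1}\xi)}\cdot\frac{h(\xi)+h(g^{-1}\xi)}{h(\xi^*)+h(g^{-1}\xi^*)},
\end{equation*}
a quotient of four level-$(n-1)$ arc lengths, all $\asymp l_{n-1}$ by the bounded geometry of the dynamical partitions. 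Each starred argument differs from its unstarred partner by a single level-$n$ step, so $\abs{h(\xi^*)-h(\xi)}$ and $\abs{h(g^{-1}\xi^*)-h(g^{-1}\xi)}$ are at most $\|h'\|_0\,l_n=\Obig(\eps_{n-1}l_n)$; dividing by the common scale $l_{n-1}$ gives $F=1+\Obig(\tfrac{l_n}{l_{n-1}}\eps_{n-1})$. Multiplying the two factors closes the induction.

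The heart of the matter is the choice of configuration. Ratio distortion alone is useless here because its $f''/2f'$ drift sums to $\Obig(1)$ over the $q_n$ orbit points (all contributions share the same sign), which forces the use of the cross-ratio, and hence of a genuine slope factor $F$. The temptation is to pair $\xi$ with a level-$n$ neighbour such as $T^{q_n}\xi$, but then $F$ retains a ratio of level-$n$ lengths at $\xi$ and $\xi^*$ that the induction cannot make small. The symmetric level-$(n-1)$ choice is precisely what turns every factor of $F$ into a ratio of comparable arcs whose arguments differ by one level-$n$ step, so that the crude bound $h'=\Obig(\eps_{n-1})$ already suffices; verifying the closed form of $\eps_n$ and the uniform comparability $\abs{\Delta^{(n-1)}(\zeta)}\asymp l_{n-1}$ (standard Denjoy theory, see \cite{S-book}) are the only remaining routine points.
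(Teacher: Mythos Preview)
Your overall strategy---factoring $(T^{q_n})'(\xi)$ as a cross-ratio distortion times a symmetric slope quotient $F$, and closing a direct induction on $n$---is sound and genuinely different from the paper's route. The paper does not induct directly on $(T^{q_n})'$; it introduces the auxiliary ratio-distortion functions $M_n(\xi)=\DR(\xi_0,\xi,\xi_{q_{n-1}};T^{q_n})$ and $K_n(\xi)=\DR(\xi_0,\xi,\xi_{q_{n}};T^{q_{n-1}})$, first runs a separate iteration (based on the exact identities (\ref{eq:observ1})--(\ref{eq:observ2})) to prove $M_n,K_n=1+\Obig(l_{n-1}^\alpha)$, and only then iterates the relation (\ref{eq:observ3}) between $(T^{q_{n+1}})'$ and $(T^{q_n})'$. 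Your single factorisation collapses these two stages into one; the paper's scheme makes the cancellation mechanism more explicit but is longer.

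There is, however, one genuine gap. The claim ``$|\Delta^{(n-1)}(\zeta)|\asymp l_{n-1}$ by standard Denjoy theory'' is not standard and is not available a priori: Denjoy's bound $\log(T^{q_{n-1}})'=\Obig(1)$ only compares $|\Delta^{(n-1)}(\zeta)|$ with $|\Delta^{(n-1)}(T^{\pm q_{n-1}}\zeta)|$, and after the $\Obig(k_{n+1})$ iterates needed to reach the point where the maximum $l_{n-1}$ is attained the constant blows up. Uniform comparability of all $|\Delta^{(n-1)}(\zeta)|$ is essentially equivalent to bounded invariant density, i.e.\ to what the whole theorem is establishing. Thus in your bound $|F-1|\lesssim\|h'\|_0\, l_n/|h(\xi)|$, replacing the possibly much smaller local length $|h(\xi)|=|\Delta^{(n-1)}(\xi)|$ by $l_{n-1}$ in the denominator is unjustified. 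The repair is painless: keep the \emph{local} numerator as well, writing $|\xi^*-\xi|=|\Delta^{(n)}(\xi)|$ (and similarly at $g^{-1}\xi$), so that what you actually need is the ratio bound
\[
\frac{|\Delta^{(n)}(\xi)|}{|\Delta^{(n-1)}(\xi)|}=\Obig\!\left(\frac{l_n}{l_{n-1}}\right)
\]
uniformly in $\xi$. This \emph{is} a consequence of classical bounded distortion---it is exactly Lemma~\ref{lemma:Delta-l} of the paper, proved from Lemma~\ref{lemma:Delta-ratio} using only (\ref{eq:log-DR}) and Lemma~\ref{lemma:disjoint}. With this correction your induction closes as written.
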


\begin{remark}
The inequality (\ref{eq:Denjoy}) can be re-written as
$$
\log(T^{q_n})'(\xi)=\Obig(\eps_n)
$$
\end{remark}

\begin{remark}\label{rem:SKerror}
In the paper \cite{SK} there was a wrong claim (Lemma~12) that one can simply put $\eps_n=l_{n-1}^\alpha$ in
(\ref{eq:Denjoy}). This is not true in the case when $l_{n-1}$ is too small in comparison with $l_{n-2}$, though
comparable with $l_{n}$.
\end{remark}

In order to prove Proposition~\ref{prop:Denjoy}, we introduce the functions
$$
M_n(\xi)=\DR(\xi_0,\xi,\xi_{q_{n-1}};T^{q_n}),\quad \xi\in\Delta_0^{(n-1)};
$$
$$
K_n(\xi)=\DR(\xi_0,\xi,\xi_{q_{n}};T^{q_{n-1}}),\quad \xi\in\Delta_0^{(n-2)},
$$
where $\xi_0$ is arbitrary fixed. The following three exact relations (all of them are easy to check) are crucial
for our proof:

\begin{equation}\label{eq:observ1}
M_n(\xi_0)\cdot M_n(\xi_{q_{n-1}})=K_n(\xi_0)\cdot K_n(\xi_{q_n});
\end{equation}

\begin{equation}\label{eq:observ2}
K_{n+1}(\xi_{q_{n-1}})-1=\frac{|\Delta_0^{(n+1)}|}{|\Delta_0^{(n-1)}|}\left(M_n(\xi_{q_{n+1}})-1\right);
\end{equation}

\begin{equation}\label{eq:observ3}
\frac{(T^{q_{n+1}})'(\xi_0)}{M_{n+1}(\xi_0)}-1=\frac{|\Delta_0^{(n+1)}|}{|\Delta_0^{(n)}|}
\left(1-\frac{(T^{q_{n}})'(\xi_0)}{K_{n+1}(\xi_0)}\right)
\end{equation}

We also need the following lemmas.
\begin{lemma}\label{lemma:Delta-ratio}
$\frac{|\Delta^{(n+m)}_i|}{|\Delta^{(n)}_i|}\sim\frac{|\Delta^{(n+m)}_j|}{|\Delta^{(n)}_j|}$, $0\le j-i<q_{n+1}$.
\end{lemma}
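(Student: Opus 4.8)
The plan is to reduce the statement to a bounded-distortion estimate for a single ratio distortion and then to control it through the disjointness Lemma~\ref{lemma:disjoint}. Write $S=T^{j-i}$ and observe that the four endpoints involved reduce to three distinct points: the common point $\xi_i$, the endpoint $\xi_{i+q_n}$ of $\Delta^{(n)}_i$, and the endpoint $\xi_{i+q_{n+m}}$ of $\Delta^{(n+m)}_i$. Since $|\Delta^{(n)}_i|=|\xi_i-\xi_{i+q_n}|$, $|\Delta^{(n)}_j|=|S\xi_i-S\xi_{i+q_n}|$, and similarly with $n+m$ in place of $n$, a direct computation identifies the quotient of the two ratios in the statement with a single ratio distortion of the iterate:
$$
\frac{|\Delta^{(n+m)}_j|/|\Delta^{(n)}_j|}{|\Delta^{(n+m)}_i|/|\Delta^{(n)}_i|}=\DR(\xi_{i+q_{n+m}},\xi_i,\xi_{i+q_n};T^{j-i}).
$$
Thus the lemma is equivalent to the claim that this ratio distortion is $\sim1$, i.e. bounded above and below by constants independent of $n,m,i,j$.

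First I would expand the ratio distortion of the iterate using the multiplicativity relation (\ref{eq:mult-DR}); taking logarithms gives
$$
\log\DR(\xi_{i+q_{n+m}},\xi_i,\xi_{i+q_n};T^{j-i})=\sum_{s=0}^{j-i-1}\log\DR(T^s\xi_{i+q_{n+m}},T^s\xi_i,T^s\xi_{i+q_n};T).
$$
Each summand is then estimated by (\ref{eq:log-DR}), which bounds $\log\DR(x_1,x_2,x_3;T)$ by $\Obig(x_1-x_3)$; here the relevant span is the diameter of the three image points, which is at most $|T^s\Delta^{(n)}_i|+|T^s\Delta^{(n+m)}_i|$ (depending on parity the two fundamental segments sit on a common or on opposite sides of $T^s\xi_i$, but in either case their union controls the span). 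Hence the whole logarithm is bounded by a constant multiple of $\sum_{s=0}^{j-i-1}\bigl(|T^s\Delta^{(n)}_i|+|T^s\Delta^{(n+m)}_i|\bigr)$.

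The decisive step is to see that both of these sums are $\Obig(1)$, and this is exactly where Lemma~\ref{lemma:disjoint} enters. Since $j-i<q_{n+1}$, the segments $T^s\Delta^{(n)}_i$, $0\le s<q_{n+1}$, are pairwise disjoint, so $\sum_s|T^s\Delta^{(n)}_i|\le1$; likewise, as $q_{n+1}\le q_{n+m+1}$, the segments $T^s\Delta^{(n+m)}_i$ with $0\le s<q_{n+1}$ form a subfamily of a disjoint family and $\sum_s|T^s\Delta^{(n+m)}_i|\le1$. Consequently $\log\DR(\xi_{i+q_{n+m}},\xi_i,\xi_{i+q_n};T^{j-i})=\Obig(1)$, which yields the desired two-sided bound and proves the lemma.

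The main obstacle I anticipate is the correct bookkeeping of the span at each step together with the verification that the single disjointness range $q_{n+1}$ is compatible with both fundamental segments at once; once the span is bounded by the sum of the two segment lengths, the disjointness lemma makes the telescoped sum collapse to $\Obig(1)$. I do not expect to need the exponential contraction of part~{\bf B} or the bound of part~{\bf A} of Denjoy's theory: the single hypothesis $j-i<q_{n+1}$, together with Lemma~\ref{lemma:disjoint} and the cross-ratio estimate (\ref{eq:log-DR}), should suffice.
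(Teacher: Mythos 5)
Your proposal is correct and follows essentially the same route as the paper: identify the quotient of the two ratios with $\DR(\xi_{i+q_{n+m}},\xi_i,\xi_{i+q_n};T^{j-i})$, telescope via (\ref{eq:mult-DR}), bound each factor through (\ref{eq:log-DR}) by $|\Delta^{(n+m)}_{i+s}|+|\Delta^{(n)}_{i+s}|$, and sum to $\Obig(1)$ using the disjointness from Lemma~\ref{lemma:disjoint}. You are also right that neither statement {\bf A} nor {\bf B} is needed here.
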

\begin{proof}
Due to (\ref{eq:mult-DR}) and (\ref{eq:log-DR})
\begin{multline*}
\left|\log\frac{|\Delta^{(n+m)}_j|\cdot|\Delta^{(n)}_i|}{|\Delta^{(n+m)}_i|\cdot|\Delta^{(n)}_j|}\right|= |\log
D(\xi_{i+q_{n+m}},\xi_i,\xi_{i+q_n};T^{j-i})|\\
\le\sum_{k=i}^{j-1}|\log
D(\xi_{k+q_{n+m}},\xi_k,\xi_{k+q_n};T)|=\Obig\left(\sum_{k=i}^{j-1}(|\Delta^{(n+m)}_k|+|\Delta^{(n)}_k|)\right)=\Obig(1)
\end{multline*}
since the circle arcs $\Delta^{(n)}_k$, $i\le k<j$, are disjoint due to Lemma~\ref{lemma:disjoint}; the same is
true for the arcs $\Delta^{(n+m)}_k$, $i\le k<j$.
\end{proof}

\begin{lemma}\label{lemma:Delta-l}
$\frac{|\Delta^{(n+m)}_0|}{|\Delta^{(n)}_0|}=\Obig\left(\frac{l_{n+m}}{l_n}\right)$.
\end{lemma}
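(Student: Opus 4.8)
The plan is to reduce the two–sided comparison to a single, favourably chosen base point on the marked trajectory and then feed it into Lemma~\ref{lemma:Delta-ratio}. First I would pick the index $j^*$, $0\le j^*<q_{n+1}$, for which the fundamental segment is longest, and put $M_n=\abs{\Delta^{(n)}_{j^*}}=\max_{0\le j<q_{n+1}}\abs{\Delta^{(n)}_j}$. Since the base indices $0$ and $j^*$ differ by less than $q_{n+1}$, Lemma~\ref{lemma:Delta-ratio} transports the ratio to this point and, using the crude bound $\abs{\Delta^{(n+m)}_{j^*}}\le l_{n+m}$, gives
\begin{equation*}
\frac{\abs{\Delta^{(n+m)}_0}}{\abs{\Delta^{(n)}_0}}\sim\frac{\abs{\Delta^{(n+m)}_{j^*}}}{\abs{\Delta^{(n)}_{j^*}}}=\frac{\abs{\Delta^{(n+m)}_{j^*}}}{M_n}\le\frac{l_{n+m}}{M_n}.
\end{equation*}
The whole statement therefore follows once I show that $M_n$ is comparable to $l_n$, i.e.\ that $l_n=\Obig(M_n)$, the reverse inequality $M_n\le l_n$ being trivial.

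The heart of the matter is thus the claim that the global maximum $l_n=\max_\xi\abs{\Delta^{(n)}(\xi)}$ is, up to a universal constant, already attained among the $q_{n+1}$ base points of the marked trajectory. To prove it I would let $\eta$ realize the maximum, so that $\Delta^{(n)}(\eta)=[\eta,T^{q_n}\eta]$ is an arc of length $l_n$, and compare it against the $n$th dynamical partition, whose elements are the disjoint arcs $\Delta^{(n)}_i$ ($0\le i<q_{n+1}$) together with the shorter arcs $\Delta^{(n+1)}_i$ ($0\le i<q_n$) (standard combinatorics, see \cite{S-book}; disjointness is Lemma~\ref{lemma:disjoint}). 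A short case check---according to whether the element $I\ni\eta$ has level $n$ or $n+1$---shows that $[\eta,T^{q_n}\eta]$ is covered by $I$ and one neighbouring partition arc of level at most $n$. Each arc in this cover has length $\Obig(M_n)$: for a level-$n$ arc of index below $q_{n+1}$ this holds by the very definition of $M_n$; for a level-$n$ arc of larger index it follows from the bounded-distortion estimate~A, since $\abs{\Delta^{(n)}_{i+q_n}}=(T^{q_n})'(\theta)\abs{\Delta^{(n)}_i}=\Obig(\abs{\Delta^{(n)}_i})$; and a level-$(n+1)$ arc is dominated by an adjacent level-$n$ arc by estimate~B. Summing gives $l_n=\Obig(M_n)$, which closes the argument.

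I expect the only delicate point to be exactly this last claim---the passage from the supremum over the whole circle to the maximum over the finitely many trajectory base points indexed by $0\le j<q_{n+1}$. Everything else is bookkeeping: Lemma~\ref{lemma:Delta-ratio} does the transport, and the estimate $\abs{\Delta^{(n+m)}_{j^*}}\le l_{n+m}$ costs nothing. The combinatorial covering of $[\eta,T^{q_n}\eta]$ has to be set up with a little care at the two ends (the case $i+q_n\ge q_{n+1}$ above, handled by estimate~A, and the level-$(n+1)$ case, handled by estimate~B), but no genuinely new estimate is required beyond Denjoy's statements~A and~B already recorded above.
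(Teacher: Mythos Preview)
Your proof is correct and follows essentially the same strategy as the paper: both arguments show that some trajectory-based segment $\Delta^{(n)}_{i^*}$ has length comparable to $l_n$, and then invoke Lemma~\ref{lemma:Delta-ratio} to transport the ratio back to the base point $0$. The paper's version is slightly more streamlined---it picks a trajectory vertex $\xi_{i+q_n}$ lying inside the maximal arc $\Delta^{(n)}(\xi^*)$, so that $\Delta^{(n)}(\xi^*)\subset\Delta^{(n)}_i\cup\Delta^{(n)}_{i+q_n}$ and hence one of these two halves has length at least $l_n/2$, then applies Lemma~\ref{lemma:Delta-ratio} (at most three times, since $i^*<q_{n+1}+2q_n<3q_{n+1}$); this avoids your appeal to the full dynamical partition and to estimate~{\bf B}.
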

\begin{proof}
Pick out the point $\xi^*\in\unitcircle$ such that $|\Delta^{(n)}(\xi^*)|=l_{n}$. Due to combinatorics of
trajectories, there exists $0\le i<q_{n+1}+q_{n}$ such that $\xi_{i+q_n}\in\Delta^{(n)}(\xi^*)$, and so
$\Delta^{(n)}(\xi^*)\subset\Delta^{(n)}_i\cup\Delta^{(n)}_{i+q_n}$. It follows that there exists $0\le
i^*<q_{n+1}+2q_n$ (either $i^*=i$ or $i^*=i+q_n$) such that $\Delta^{(n)}_{i^*}\ge l_n/2$, and so
$\frac{|\Delta^{(n+m)}_{i^*}|}{|\Delta^{(n)}_{i^*}|}\le\frac{2l_{n+m}}{l_n}$. The statement now follows from
Lemma~\ref{lemma:Delta-ratio} (since $q_{n+1}+2q_n<3q_{n+1}$, we need to apply Lemma~\ref{lemma:Delta-ratio} at
most three times).
\end{proof}

\begin{lemma}\label{lemma:Dist-T^qn}
\begin{gather*}
\log\Dist(\xi_0,\xi,\xi_{q_{n-1}},\eta;T^{q_n})=\Obig(l_{n-1}^\alpha), \quad \xi,\eta\in\Delta^{(n-1)}_0;\\
\log\Dist(\xi_0,\xi,\xi_{q_n},\eta;T^{q_{n-1}})=\Obig(l_{n}^\alpha), \quad \xi,\eta\in\Delta^{(n-2)}_0
\end{gather*}
\end{lemma}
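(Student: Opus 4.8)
The plan is to write each iterate as a product of single applications of $T$, apply the multiplicativity (\ref{eq:mult-Dist}), and then control every factor by the one-step estimate (\ref{eq:log-Dist}).

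For the first estimate I would iterate (\ref{eq:mult-Dist}) to obtain
$$\log\Dist(\xi_0,\xi,\xi_{q_{n-1}},\eta;T^{q_n})=\sum_{k=0}^{q_n-1}\log\Dist(\xi_k,T^k\xi,\xi_{k+q_{n-1}},T^k\eta;T).$$
At the $k$-th step all four points lie in $\Delta^{(n-1)}_k$, so in (\ref{eq:log-Dist}) both the factor $x_1-x_3=\xi_k-\xi_{k+q_{n-1}}$ and the diameter $\Delta$ are bounded by $|\Delta^{(n-1)}_k|$; hence the $k$-th term is $\Obig(|\Delta^{(n-1)}_k|^{1+\alpha})$. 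Factoring $l_{n-1}^\alpha$ out of each term and using that the arcs $\Delta^{(n-1)}_k$, $0\le k<q_n$, are pairwise disjoint by Lemma~\ref{lemma:disjoint} (so their lengths sum to at most $1$), I get $\Obig\!\left(l_{n-1}^\alpha\sum_k|\Delta^{(n-1)}_k|\right)=\Obig(l_{n-1}^\alpha)$.

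The second estimate is decomposed the same way into $q_{n-1}$ iterates, but now the two scales entering (\ref{eq:log-Dist}) differ: the points $\xi_k,T^k\xi,\xi_{k+q_n},T^k\eta$ lie in $\Delta^{(n-2)}_k$, so $\Delta\le|\Delta^{(n-2)}_k|$, whereas $x_1-x_3=\xi_k-\xi_{k+q_n}$ has magnitude $|\Delta^{(n)}_k|$. The $k$-th term is therefore $\Obig(|\Delta^{(n)}_k|\,|\Delta^{(n-2)}_k|^\alpha)$. This is where the main work lies. I would use Lemma~\ref{lemma:Delta-ratio}, applicable since $0\le k<q_{n-1}$, to replace $|\Delta^{(n)}_k|$ by $\frac{|\Delta^{(n)}_0|}{|\Delta^{(n-2)}_0|}|\Delta^{(n-2)}_k|$ up to a uniform constant, turning the $k$-th term into $\Obig\!\left(\frac{|\Delta^{(n)}_0|}{|\Delta^{(n-2)}_0|}|\Delta^{(n-2)}_k|^{1+\alpha}\right)$. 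Summing as before, using disjointness of $\Delta^{(n-2)}_k$, $0\le k<q_{n-1}$, and factoring out $l_{n-2}^\alpha$, gives $\Obig\!\left(\frac{|\Delta^{(n)}_0|}{|\Delta^{(n-2)}_0|}l_{n-2}^\alpha\right)$, and Lemma~\ref{lemma:Delta-l} bounds the ratio by $\Obig(l_n/l_{n-2})$.

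It then remains to check that $\frac{l_n}{l_{n-2}}l_{n-2}^\alpha=\Obig(l_n^\alpha)$, which follows from $l_n\le l_{n-2}$ and $0\le1-\alpha\le1$, since $\frac{l_n}{l_{n-2}}l_{n-2}^\alpha=l_n^\alpha\,(l_n/l_{n-2})^{1-\alpha}\le l_n^\alpha$. The main obstacle is precisely this mismatch of scales: unlike in the first estimate, the $(x_1-x_3)$-factor and the diameter refer to different fundamental segments, and the naive bound $|\Delta^{(n-2)}_k|^\alpha\le l_{n-2}^\alpha$ would yield only the weaker $\Obig(l_{n-2}^\alpha)$. Synchronizing the two scales along the whole orbit via Lemma~\ref{lemma:Delta-ratio}, and absorbing the leftover ratio through $l_n\le l_{n-2}$, is what recovers the sharp $\Obig(l_n^\alpha)$.
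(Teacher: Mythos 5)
Your proof is correct. For the first estimate it is exactly the paper's (one-line) argument: decompose via (\ref{eq:mult-Dist}), apply (\ref{eq:log-Dist}) with both the factor $x_1-x_3$ and the diameter $\Delta$ at scale $|\Delta^{(n-1)}_k|$, and sum the disjoint arcs. For the second estimate you correctly identify the scale mismatch ($x_1-x_3$ at scale $|\Delta^{(n)}_k|$, $\Delta$ at scale $|\Delta^{(n-2)}_k|$), but you resolve it by a detour through Lemmas~\ref{lemma:Delta-ratio} and~\ref{lemma:Delta-l}, whereas the paper cites only Lemma~\ref{lemma:disjoint}, suggesting the intended closing step is the elementary interpolation
$$
|\Delta^{(n)}_k|\,|\Delta^{(n-2)}_k|^{\alpha}
=|\Delta^{(n)}_k|^{\alpha}\,|\Delta^{(n)}_k|^{1-\alpha}\,|\Delta^{(n-2)}_k|^{\alpha}
\le l_n^{\alpha}\,|\Delta^{(n-2)}_k|,
$$
which uses only $\Delta^{(n)}_k\subset\Delta^{(n-2)}_k$ and $|\Delta^{(n)}_k|\le l_n$, after which summing the disjoint arcs $\Delta^{(n-2)}_k$, $0\le k<q_{n-1}$, gives $\Obig(l_n^{\alpha})$ directly. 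Your route trades this one-line pointwise inequality for two comparability lemmas plus the observation $\tfrac{l_n}{l_{n-2}}l_{n-2}^{\alpha}\le l_n^{\alpha}$ (which is the same interpolation, just performed once at the global scale rather than term by term); it is sound and yields the same bound, but the extra machinery is not needed here.
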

\begin{proof}
Follows from (\ref{eq:mult-Dist}), (\ref{eq:log-Dist}) and Lemma~\ref{lemma:disjoint} similar to the proof of
Lemma~\ref{lemma:Delta-ratio}.
\end{proof}

\begin{lemma}\label{lemma:l-lambda}
$\frac{l_{n+m}}{l_n}=\Obig(\lambda^m)$.
\end{lemma}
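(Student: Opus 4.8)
The plan is to bound the global ratio $l_{n+m}/l_n$ by a single local ratio of nested fundamental segments rooted at one well-chosen point, and then to read off the geometric decay directly from Denjoy property~\textbf{B}. The point of the argument is that the maximum defining $l_{n+m}$ and the maximum defining $l_n$ need not be attained at the same place, but monotonicity of the defining maxima lets us replace the denominator $l_n$ by the segment of generation $n$ sitting at the \emph{same} base point as the maximizing segment of generation $n+m$.

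Concretely, I would first choose a point $\eta\in\unitcircle$ at which the maximum in the definition of $l_{n+m}$ is attained, so that $|\Delta^{(n+m)}(\eta)|=l_{n+m}$. Since $l_n=\max_{\xi}|\Delta^{(n)}(\xi)|\ge|\Delta^{(n)}(\eta)|$, I can estimate
\[
\frac{l_{n+m}}{l_n}=\frac{|\Delta^{(n+m)}(\eta)|}{l_n}\le\frac{|\Delta^{(n+m)}(\eta)|}{|\Delta^{(n)}(\eta)|}.
\]
Thus the genuinely global quantity $l_{n+m}/l_n$ is dominated by the purely local quotient of two nested fundamental segments based at $\eta$, and all that remains is to control that local quotient.

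For the local quotient I would apply property~\textbf{B}, taking $\eta$ itself as the marked point, to conclude that $|\Delta^{(n+m)}(\eta)|/|\Delta^{(n)}(\eta)|=\Obig(\lambda^m)$, which gives the lemma. The one point demanding care — and the only substantive issue — is that I am invoking~\textbf{B} at an \emph{arbitrary} base point $\eta$, so I must know that the contraction constant $\lambda\in(0,1)$ and the implied $\Obig$-constant in~\textbf{B} are uniform over the choice of starting trajectory. This is indeed the case: \textbf{B} is a property of the diffeomorphism $T$, a consequence of the uniform bounded-distortion estimate underlying Denjoy's theorem, hence it holds with the same constants for every starting point, in particular for the trajectory issued from $\eta$. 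Granting this uniformity, the whole proof reduces to the elementary inequality $|\Delta^{(n)}(\eta)|\le l_n$ together with the geometric decay in~\textbf{B}, with no further computation required.
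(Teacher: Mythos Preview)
Your proof is correct and essentially identical to the paper's: both pick a point where $|\Delta^{(n+m)}(\cdot)|$ attains its maximum $l_{n+m}$, use $l_n\ge|\Delta^{(n)}(\cdot)|$ at that same point to bound the global ratio by a local one, and then invoke the uniform exponential decay from Denjoy's theory. The only cosmetic difference is that the paper cites statement~\textbf{A} at the final step while you (more naturally) invoke~\textbf{B}; your remark on the uniformity of the constants in~\textbf{B} over the choice of base point is exactly what makes the last step legitimate.
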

\begin{proof}
Pick out the point $\xi^*\in\unitcircle$ such that $|\Delta^{(n+m)}(\xi^*)|=l_{n+m}$. It is easy to see that
$\frac{l_{n+m}}{l_n}\le\frac{|\Delta^{(n+m)}(\xi^*)|}{|\Delta^{(n)}(\xi^*)|}=\Obig(\lambda^m)$ due to the
statement {\bf A} above.
\end{proof}

\begin{proof}[Proof of Proposition~\ref{prop:Denjoy}]
Since $M_n(\xi)/M_n(\eta)=\Dist(\xi_0,\xi,\xi_{q_{n-1}},\eta;T^{q_n})$ and $K_n(\xi)/K_n(\eta)=$
\linebreak
$\Dist(\xi_0,\xi,\xi_{q_{n}},\eta;T^{q_{n-1}})$, Lemma~\ref{lemma:Dist-T^qn} implies that
$M_n(\xi)/M_n(\eta)=1+\Obig(l_{n-1}^\alpha)$ and $K_n(\xi)/K_n(\eta)=1+\Obig(l_{n}^\alpha)$. Due to the statement
{\bf B} above, the functions $M_n$ and $K_n$ are bounded from above and from below uniformly in $n$. This gives us
\begin{equation}\label{eq:observ1res}
M_n(\xi)=m_n+\Obig(l_{n-1}^\alpha),\qquad K_n(\xi)=m_n+\Obig(l_{n}^\alpha)
\end{equation}
where $m_n^2$ denotes the products in (\ref{eq:observ1}). Due to (\ref{eq:observ2}) and (\ref{eq:observ1res}) we
have
\begin{equation}\label{eq:observ1sub}
m_{n+1}-1=\frac{|\Delta_0^{(n+1)}|}{|\Delta_0^{(n-1)}|}(m_n-1)+\Obig(l_{n+1}^\alpha),
\end{equation}
which is iterated into
\begin{multline*}
m_n-1=\Obig\left(\sum_{k=0}^n l_{n-k}^\alpha
\frac{|\Delta^{(n)}_0|}{|\Delta^{(n-k)}_0|}\frac{|\Delta^{(n-1)}_0|}{|\Delta^{(n-k-1)}_0|}\right)\\
=\Obig\left(l_n^\alpha\sum_{k=0}^n\left(\frac{l_n}{l_{n-k}}\right)^{1-\alpha}\frac{l_{n-1}}{l_{n-k-1}}\right)
=\Obig\left(l_n^\alpha\sum_{k=0}^n(\lambda^{2-\alpha})^k\right)=\Obig(l_n^\alpha)
\end{multline*}
due to Lemmas~\ref{lemma:Delta-l} and~\ref{lemma:l-lambda}. Hence,
\begin{equation}\label{eq:observ2res}
M_n(\xi)=1+\Obig(l_{n-1}^\alpha),\qquad K_n(\xi)=1+\Obig(l_{n}^\alpha)
\end{equation}
Due to (\ref{eq:observ3}) and (\ref{eq:observ2res}) we have
\begin{equation}\label{eq:observ3sub}
(T^{q_{n+1}})'(\xi_0)-1=\frac{|\Delta_0^{(n+1)}|}{|\Delta_0^{(n)}|}(1-(T^{q_{n}})'(\xi_0))+\Obig(l_n^\alpha)
\end{equation}
which is iterated into
$$
(T^{q_{n}})'(\xi_0)-1=\Obig\left(\sum_{k=0}^n l_{n-k-1}^\alpha\frac{|\Delta^{(n)}_0|}{|\Delta^{(n-k)}_0|}\right)
$$
The statement of the proposition now follows from Lemma~\ref{lemma:Delta-l}.
\end{proof}

\begin{remark}\label{rem:4}
Due to Lemma~\ref{lemma:l-lambda}, $\eps_n=\Obig(\lambda^{\alpha n})$ for $0\le\alpha<1$ and
$\eps_n=\Obig(n\lambda^n)$ for $\alpha=1$, so $\eps_n$ decays exponentially for $\alpha>0$.
\end{remark}

\subsection{Exponential bound on $k_{n+1}\eps_n$}

Let $r(n+m,n)$, $m\ge0$, be the number of indices $0\le
i<q_{n+m+1}$ such that $\Delta_i^{(n+m)}\subset\Delta_0^{(n)}$. It
is easy to see that $r(n,n)=1$, $r(n+1,n)=k_{n+2}$,
$r(n+m,n)=r(n+m-1,n)k_{n+m+1}+r(n+m-2,n)$ for $m\ge2$.
\begin{lemma}\label{lemma:l_m/l_n} There exists a constant $C>0$ such that
\begin{equation}\label{eq:l_m/l_n}
\frac{l_n}{l_{n+m}}\ge r(n+m,n)\left(1-C\sum_{s=n+1}^{n+m}k_{s+1}\eps_s\right)
\end{equation}
\end{lemma}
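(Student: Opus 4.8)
The plan is to exploit that $l_n$, $l_{n+m}$ and $r(n+m,n)$ are all defined combinatorially and so do not depend on the choice of the marked trajectory; this leaves me free to pick the base point $\xi_0$ conveniently. I would choose $\xi_0$ so that a \emph{maximal} $(n+m)$-th fundamental segment, i.e. one of length $l_{n+m}$, occurs among the segments $\Delta_i^{(n+m)}$ lying inside $\Delta_0^{(n)}$. That such a choice is possible follows from the nesting of the dynamical partitions: taking $\xi^*$ with $|\Delta^{(n+m)}(\xi^*)|=l_{n+m}$, the segment $\Delta^{(n+m)}(\xi^*)$ is contained in an $n$-th fundamental segment of the same trajectory, so I set $\xi_0=T^{-j}\xi^*$ for the appropriate $j\ge0$. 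Write $I=\{\,i:0\le i<q_{n+m+1},\ \Delta_i^{(n+m)}\subset\Delta_0^{(n)}\,\}$, so that $|I|=r(n+m,n)$ by definition of $r$, and let $i_0\in I$ index the maximal segment.

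The first, easy half is purely combinatorial. By Lemma~\ref{lemma:disjoint} the segments $\Delta_i^{(n+m)}$, $i\in I$, are pairwise disjoint and contained in $\Delta_0^{(n)}$, so that $l_n\ge|\Delta_0^{(n)}|\ge\sum_{i\in I}|\Delta_i^{(n+m)}|$. Hence it suffices to bound each summand from below by $l_{n+m}\bigl(1-C\sum_{s=n+1}^{n+m}k_{s+1}\eps_s\bigr)$: summing the $r(n+m,n)$ terms and dividing by $l_{n+m}$ would then give exactly (\ref{eq:l_m/l_n}) (the estimate being vacuous when the bracket is negative).

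The second half, which I expect to be the crux, is the uniform length comparison
$$\left|\log\frac{|\Delta_i^{(n+m)}|}{l_{n+m}}\right|=\Obig\Big(\sum_{s=n+1}^{n+m}k_{s+1}\eps_s\Big),\qquad i\in I.$$
I would derive this from the recursive structure of the dynamical partition inside $\Delta_0^{(n)}$. Since $\Delta_{s-1}=k_{s+1}\Delta_s+\Delta_{s+1}$, each $(s-1)$-th fundamental segment splits into exactly $k_{s+1}$ segments of level $s$ (together with one of level $s+1$), consecutive ones being images of each other under $T^{q_s}$. Thus the segments $\Delta_i^{(n+m)}\subset\Delta_0^{(n)}$ are the leaves of a tree of depth $m$, with levels $s=n+1,\dots,n+m$. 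To compare a given leaf with the reference leaf $\Delta_{i_0}^{(n+m)}$ I would telescope through this tree: at each level $s$ the two relevant branches are related by a shift $T^{tq_s}$ with $|t|<k_{s+1}$, so the corresponding descendants have length ratio $(T^{tq_s})'(\cdot)$, a product of at most $k_{s+1}$ factors $(T^{q_s})'(\cdot)$. The \emph{uniform} Denjoy inequality of Proposition~\ref{prop:Denjoy}, namely $(T^{q_s})'(\cdot)=1+\Obig(\eps_s)$, bounds each factor and yields $1+\Obig(k_{s+1}\eps_s)$ per level; multiplying over $s=n+1,\dots,n+m$ gives the displayed estimate, whence $|\Delta_i^{(n+m)}|\ge l_{n+m}\bigl(1-C\sum_{s=n+1}^{n+m}k_{s+1}\eps_s\bigr)$ via $e^{-x}\ge1-x$. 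The hard part will be precisely this bookkeeping — pairing the branching multiplicity $k_{s+1}$ at level $s$ with the uniform derivative control $\eps_s$ — while the rest reduces to disjointness and elementary inequalities.
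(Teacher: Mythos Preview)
Your proposal is correct and follows essentially the same route as the paper: compare any two level-$(n+m)$ segments inside $\Delta_0^{(n)}$ via the tree structure of the dynamical partition, bounding the log-ratio by $\Obig\bigl(\sum_{s=n+1}^{n+m}k_{s+1}\eps_s\bigr)$ through Proposition~\ref{prop:Denjoy}, then sum the $r(n+m,n)$ disjoint pieces and choose the marked point so that one of them has the maximal length $l_{n+m}$. A minor simplification: you can take $\xi_0=\xi^*$ directly, since $\Delta^{(n+m)}(\xi^*)\subset\Delta^{(n)}(\xi^*)$ always (they share the endpoint $\xi^*$ and lie on the same side), so $i_0=0$ and the detour through $T^{-j}$ is unnecessary.
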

\begin{proof}
If $\Delta_i^{(n+m)}$ and $\Delta_j^{(n+m)}$, $0\le
i,j<q_{n+m+1}$, are contained in $\Delta_0^{(n)}$, then
$$
\log\frac{|\Delta_i^{(n+m)}|}{|\Delta_j^{(n+m)}|}=\Obig\left(\sum_{s=n+1}^{n+m}k_{s+1}\eps_s\right)
$$
due to the combinatorics of dynamical partitions and Proposition~\ref{prop:Denjoy}. (One of the segments
$\Delta_i^{(n+m)}$ and $\Delta_j^{(n+m)}$ is mapped onto another by a composition of no more than $k_{n+2}$ maps
$T^{q_{n+1}}$, no more than $k_{n+3}$ maps $T^{q_{n+2}}$, \dots, and no more than $k_{n+m+1}$ maps $T^{q_{n+m}}$.)
It follows that there exists $C>0$ such that
$$
|\Delta_0^{(n)}|\ge r(n+m,n)|\Delta_i^{(n+m)}|\left(1-C\left(\sum_{s=n+1}^{n+m}k_{s+1}\eps_s\right)\right)
$$
for any fixed $0\le i<q_{n+m+1}$ such that $\Delta_i^{(n+m)}\subset\Delta_0^{(n)}$. Now we choose $\xi_0$ in such
a way that $|\Delta_i^{(n+m)}|=l_{n+m}$ and obtain (\ref{eq:l_m/l_n}).
\end{proof}

\begin{proposition}\label{prop:k_n+1eps_n}
For any chosen $\lambda_0\in(\lambda^{\alpha-\delta},1)$, the following asymptotics hold:
$$
k_{n+1}\eps_n=\Obig(\lambda_0^n)
$$
\end{proposition}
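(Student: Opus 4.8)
The plan is to reduce the whole estimate to the single comparison $k_{n+1}\le C\,l_{n-1}/l_n$ and then read off the decay rate from the Diophantine condition. Granting this comparison, I would write $k_{n+1}\eps_n\le C\frac{l_{n-1}}{l_n}\eps_n$ and exploit the exact identity that follows from the definition \eqref{eq:Denjoy} of $\eps_n$ by pulling the factor $l_{n-1}/l_n$ into the sum:
\[
\frac{l_{n-1}}{l_n}\,\eps_n=\eps_{n-1}+\frac{l_{n-1}^{1+\alpha}}{l_n}.
\]
Here the terms $j=0,\dots,n-1$ reassemble into $\eps_{n-1}$, while the top term $j=n$ produces $l_{n-1}^{1+\alpha}/l_n$. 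The first summand is harmless: by Remark~\ref{rem:4}, $\eps_{n-1}=\Obig(\lambda^{\alpha(n-1)})$.

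For the second summand I would invoke the bounded geometry $l_m\asymp\Delta_m$ together with the Diophantine inequality \eqref{eq:Dioph}. Since $\Delta_n\ge c\,\Delta_{n-1}^{1+\delta}$ and $\Delta_{n-1}\le l_{n-1}=\Obig(\lambda^{n-1})$ (Lemmas~\ref{lemma:l>Delta} and~\ref{lemma:l-lambda}), I get
\[
\frac{l_{n-1}^{1+\alpha}}{l_n}\le C\,\frac{\Delta_{n-1}^{1+\alpha}}{\Delta_n}\le C'\,\Delta_{n-1}^{\alpha-\delta}=\Obig\bigl(\lambda^{(\alpha-\delta)n}\bigr),
\]
where $\alpha-\delta>0$ is what turns the upper bound on $\Delta_{n-1}$ into an upper bound on $\Delta_{n-1}^{\alpha-\delta}$. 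Since $0<\delta$ forces $\lambda^{\alpha}<\lambda^{\alpha-\delta}$, the two contributions combine to $k_{n+1}\eps_n=\Obig(\lambda^{(\alpha-\delta)n})$, and as $\lambda_0>\lambda^{\alpha-\delta}$ this is $\Obig(\lambda_0^n)$, which is the claim.

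It remains to justify the reduction $k_{n+1}\le C\,l_{n-1}/l_n$ together with the comparability $l_m\asymp\Delta_m$, and this is where the real work lies. The natural tool is Lemma~\ref{lemma:l_m/l_n}: its $m=1$ instance reads $l_{n-1}/l_n\ge k_{n+1}\bigl(1-C\,k_{n+1}\eps_n\bigr)$, so as soon as $C\,k_{n+1}\eps_n\le\tfrac12$ one obtains $k_{n+1}\le 2\,l_{n-1}/l_n$; and applying the lemma from a fixed base with $m\to\infty$ converts smallness of the tail $\sum_s k_{s+1}\eps_s$ into the upper bound $l_m\le K\Delta_m$. The difficulty is that both inputs are exactly the quantities being estimated, so the argument must be run as one induction on $n$, in which the estimate of the previous paragraph (needing only $l_{n-1}\le K\Delta_{n-1}$, already available) feeds the correction factor of Lemma~\ref{lemma:l_m/l_n} at the next step; some care is also needed so that the comparability constant $K$ does not degrade from step to step.

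The main obstacle is that the self-improving inequality $b_n\ge a_n(1-Ca_n)$, with $a_n=k_{n+1}\eps_n$ and $b_n=(l_{n-1}/l_n)\eps_n$ already shown to be $\Obig(\lambda^{(\alpha-\delta)n})$, has two solution branches, and no crude a priori bound excludes the large one: only $k_{n+1}\le\Delta_{n-1}/\Delta_n\le C\Delta_{n-1}^{-\delta}$ is automatic, and $\Delta_{n-1}$ may be doubly-exponentially small, so $a_n$ could in principle be as large as $\approx 1/C$. One must therefore carry, as part of the induction, the assertion that $a_n$ stays on the small branch $a_n\lesssim b_n$ rather than jumping to the branch $a_n\gtrsim 1/C$. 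This is precisely the delicate point mishandled in \cite{SK} (cf.\ Remark~\ref{rem:SKerror}), and confining $a_n$ to the correct branch for every $n$ is the crux of the proof.
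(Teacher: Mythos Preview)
Your reduction to $k_{n+1}\le C\,l_{n-1}/l_n$ and the identity $\frac{l_{n-1}}{l_n}\eps_n=\eps_{n-1}+\frac{l_{n-1}^{1+\alpha}}{l_n}$ are correct and pleasant, but the argument breaks at the step
\[
\frac{l_{n-1}^{1+\alpha}}{l_n}\;\le\;C\,\frac{\Delta_{n-1}^{1+\alpha}}{\Delta_n}.
\]
This inequality needs the \emph{upper} bound $l_{n-1}\le K\,\Delta_{n-1}$, and at this point in the paper only the opposite inequality $l_n\ge\Delta_n$ (Lemma~\ref{lemma:l>Delta}) is available. The comparability $l_n\asymp\Delta_n$ is derived only \emph{after} Proposition~\ref{prop:C^1}, which itself rests on the very estimate you are trying to prove. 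You recognise this circularity and propose to resolve it by an induction that simultaneously propagates both $l_n\le K\Delta_n$ and the smallness of $k_{n+1}\eps_n$; but the proposal stops exactly at the hard part. In particular, Lemma~\ref{lemma:l_m/l_n} only yields $l_{n_0}/l_n\ge r(n,n_0)(1-C\sum k_{s+1}\eps_s)$, and to convert this into $l_n\le K\Delta_n$ you need $r(n,n_0)\asymp\Delta_{n_0}/\Delta_n$ together with control of the tail sum---so the inductive step already consumes what it is supposed to produce, and you give no mechanism that keeps the constant $K$ from drifting. The ``two-branch'' obstacle you describe in the last paragraph is real and is not closed by anything you have written; as it stands, this is an outline that names the difficulty rather than a proof that resolves it. (Incidentally, Remark~\ref{rem:SKerror} concerns a different error---the false claim $\eps_n=l_{n-1}^\alpha$---not the branch-selection issue.)

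The paper circumvents the circularity in a quite different way: it never uses $l_n\le K\Delta_n$. Instead it argues by contradiction, isolating the subsequence $n_i$ of indices where $k_{n+1}\eps_n>\lambda_0^n$, and shows that along this subsequence one has $l_{n_i}\le\bigl(l_{n_{i-1}}/r(n_i-1,n_{i-1})\bigr)^{1+\delta+\kappa}$ while the Diophantine condition gives $\Delta_{n_i}\ge\bigl(\Delta_{n_{i-1}}/r(n_i-1,n_{i-1})\bigr)^{1+\delta+\kappa/2}$. These two inequalities force the ratio $\log l_{n_i}/\log\Delta_{n_i}$ to grow geometrically, contradicting the \emph{one-sided} bound $l_n\ge\Delta_n$ from Lemma~\ref{lemma:l>Delta}. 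The point is that only the easy inequality is needed, and it is used at the very end to close the contradiction rather than as an input to a forward induction.
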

\begin{proof}
Consider the sequence $n_i$, $i\ge0$, of all indices $n$ such that $k_{n+1}\eps_n>\lambda_0^n$, and assume it to
be infinite. Similarly to proof of Lemma~\ref{lemma:l_m/l_n}, we have
$|\Delta^{(n)}_{q_{n-1}+kq_n}|\ge|\Delta^{(n)}_{q_{n-1}}|(1-Ck\eps_n)$. Choosing $k^*_{n_i+1}\le k_{n_i+1}$ in
such a way that $k^*_{n_i+1}\eps_{n_i}>\lambda_0^{n_i}$ but $Ck^*_{n_i+1}\eps_{n_i}\le\frac{1}{2}$ (it is possible
for large enough $i$ since both $\lambda_0^n$ and $\eps_n$ decay exponentially), we achieve
$|\Delta^{(n_i-1)}_{0}|\ge\sum_{k=0}^{k^*_{n_i+1}-1}|\Delta^{(n_i)}_{q_{n_i-1}+kq_{n_i}}|\ge
\frac{1}{2}k^*_{n_i+1}|\Delta^{(n_i)}_{q_{n_i-1}}|$. With $\xi_0$ such that
$|\Delta^{(n_i)}_{q_{n_i-1}}|=l_{n_i}$, this implies
\begin{equation}\label{eq:Lambda/eps}
\frac{l_{n_i-1}}{l_{n_i}}>\frac{\lambda_0^{n_i}}{2\eps_{n_i}}
\end{equation}
From the equality $\eps_{n_i}=l_{n_i-1}^\alpha+\frac{l_{n_i}}{l_{n_i-1}}\eps_{n_i-1}$, in view of
(\ref{eq:Lambda/eps}) we get $\eps_{n_i}(1-2\eps_{n_i-1}\lambda_0^{-n_i})<l_{n_i-1}^\alpha$. Since
$\eps_n\lambda_0^{-n}$ decays exponentially (see Remark~\ref{rem:4}), this proves that
$\eps_{n_i}=\Obig(l_{n_i-1}^\alpha)$. Hence, (\ref{eq:Lambda/eps}) implies
\begin{equation}\label{eq:l/l1}
l_{n_i}=\Obig(l_{n_i-1}^{1+\alpha}\lambda_0^{-n_i})
\end{equation}
Due to Lemma~\ref{lemma:l_m/l_n}, $\frac{l_{n_{i-1}}}{l_{n_i-1}}\ge
r(n_i-1,n_{i-1})\left(1-C\sum_{s=n_{i-1}+1}^{n_i-1}\lambda_0^s\right)\ge\frac{1}{2}r(n_i-1,n_{i-1})$ for large
enough $i$, so
\begin{equation}\label{eq:l/l2}
l_{n_i-1}=\Obig\left(\frac{l_{n_{i-1}}}{r(n_i-1,n_{i-1})}\right)
\end{equation}
The estimate (\ref{eq:l/l1}) and Lemma~\ref{lemma:l-lambda} imply
$l_{n_i}=\Obig(l_{n_i-1}^{1+\delta+\kappa}\lambda^{(\alpha-\delta-\kappa)n_i}\lambda_0^{-n_i})$ for any
$\kappa\in(0,\alpha-\delta)$. Having taken $\kappa$ so small that $\lambda^{\alpha-\delta-\kappa}<\lambda_0$ and
using (\ref{eq:l/l2}), we achieve
\begin{equation}\label{eq:l/lmain}
l_{n_i}\le\left(\frac{l_{n_{i-1}}}{r(n_i-1,n_{i-1})}\right)^{1+\delta+\kappa}
\end{equation}
for large enough $i$.

Now we start to use the Diophantine properties of rotation number $\rho$. We have
$\Delta_n=r(n+m,n)\Delta_{n+m}+r(n+m-1,n)\Delta_{n+m+1}$, so
$\Delta_{n_{i-1}}=\Obig(r(n_i-1,n_{i-1})\Delta_{n_i-1})$. The property (\ref{eq:Dioph}) implies
$\Delta_{n_i-1}^{1+\delta+\kappa/2}=\Obig(\Delta_{n_i}\Delta_{n_i-1}^{\kappa/2})$, hence
\begin{equation}\label{eq:D/Dmain}
\left(\frac{\Delta_{n_{i-1}}}{r(n_i-1,n_{i-1})}\right)^{1+\delta+\kappa/2}\le\Delta_{n_i}
\end{equation}
for large enough $i$.

Notice, that $0<\Delta_n\le l_n<1$ for all $n$. It follows from (\ref{eq:l/lmain}) and (\ref{eq:D/Dmain}) that
\begin{equation}
\frac{\log l_{n_i}}{\log \Delta_{n_i}}\ge\frac{1+\delta+\kappa}{1+\delta+\kappa/2}\cdot \frac{\log
l_{n_{i-1}}-\log r(n_i-1,n_{i-1})}{\log \Delta_{n_{i-1}}-\log r(n_i-1,n_{i-1})}\ge K\frac{\log l_{n_{i-1}}}{\log
\Delta_{n_{i-1}}}
\end{equation}
for large enough $i$, with $K=\frac{1+\delta+\kappa}{1+\delta+\kappa/2}>1$, so the sequence $\gamma_i=\frac{\log
l_{n_i}}{\log \Delta_{n_i}}>0$ is unbounded. But $\gamma_i\le1$ due to Lemma~\ref{lemma:l>Delta}. This
contradiction proves that $k_{n+1}\eps_n\le\lambda_0^n$ for large enough $n$.
\end{proof}

\subsection{$C^1$-smoothness of $\phi$}

There is more than one way to derive $C^1$-smoothness of the conjugacy from the convergence of the series
$\sum_nk_{n+1}\eps_n$. We will construct the continuous density $h:\unitcircle\to(0,+\infty)$ of the invariant
probability measure for $T$, as in \cite{SK}.

\begin{proposition}\label{prop:C^1}
$\phi$ is a $C^1$-smooth diffeomorphism.
\end{proposition}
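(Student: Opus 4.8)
The plan is to construct an explicit candidate density $h$ for the invariant measure of $T$ and then verify both that $h$ is well-defined and continuous and that it yields the required $C^1$-smoothness of the conjugacy $\phi$. The natural construction uses the dynamical partitions: for each $n$, the circle is tiled (up to endpoints) by the disjoint fundamental segments $\Delta^{(n)}_i$ and $\Delta^{(n-1)}_i$, and on each such segment I would define a piecewise-constant approximant $h_n$ by assigning to $\Delta^{(n)}_i$ the value proportional to $\Delta_n/|\Delta^{(n)}_i|$ (and similarly on the $(n-1)$-segments), normalized so that $\int_{\unitcircle} h_n \, d\xi = 1$. Thus $h_n$ is manufactured so that it pushes the rigid-rotation lengths $\Delta_n$ back onto the dynamical lengths; by Lemma~\ref{lemma:l>Delta} and statement~\textbf{B} these values are bounded above and below uniformly in $n$.

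The heart of the argument is to show that the sequence $h_n$ converges uniformly. First I would compare $h_{n+1}$ with $h_n$ on a single segment $\Delta^{(n)}_i$: refining the level-$n$ partition to the level-$(n+1)$ partition subdivides $\Delta^{(n)}_i$ into $k_{n+2}+1$ (or so) subsegments of the form $\Delta^{(n+1)}_j$ and $\Delta^{(n)}_{j}$, and the relative lengths of these subsegments are controlled by the Denjoy-type estimate $(T^{q_{n+1}})'=1+\Obig(\eps_{n+1})$ of Proposition~\ref{prop:Denjoy}. Iterating the distortion over the at most $k_{n+2}$ applications of $T^{q_{n+1}}$ needed to reach any subsegment (exactly as in the proof of Lemma~\ref{lemma:l_m/l_n}) gives $\log(h_{n+1}/h_n)=\Obig(k_{n+2}\eps_{n+1})$ on $\Delta^{(n)}_i$ uniformly in $i$, hence $\|\log h_{n+1}-\log h_n\|_0=\Obig(k_{n+2}\eps_{n+1})$. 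The exponential bound $k_{n+1}\eps_n=\Obig(\lambda_0^n)$ from Proposition~\ref{prop:k_n+1eps_n} makes $\sum_n k_{n+2}\eps_{n+1}$ a convergent geometric series, so $\log h_n$ is uniformly Cauchy and $h_n\to h$ uniformly, with $h$ continuous and bounded away from $0$ and $\infty$.

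Once $h$ exists, I would identify $h\,d\xi$ as the invariant probability measure $\mu$: each $h_n$ is constructed to be $T$-invariant up to an error governed by the same distortion estimates, so the limit $h\,d\xi$ is genuinely $T$-invariant and, being the unique invariant probability measure, coincides with $\mu$. The conjugacy is then given explicitly by $\phi(\xi)=\int_{\xi_0}^{\xi} h(t)\,dt$ (fixing the normalization $\phi(\xi_0)=0$), since pushing $\mu$ forward linearizes $T$ to $R_\rho$. Because $h$ is continuous and strictly positive, $\phi$ is $C^1$ with $\phi'=h>0$, and its inverse is likewise $C^1$; this is precisely the assertion of the proposition.

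I expect the main obstacle to be the distortion-control step: verifying cleanly that the ratio of consecutive approximants on a level-$n$ segment is $\Obig(k_{n+2}\eps_{n+1})$ requires organizing the refinement combinatorially so that every level-$(n+1)$ subsegment is reached from a fixed reference subsegment by a bounded-in-structure composition of maps $T^{q_{n+1}}$, and then summing the per-step errors against the Denjoy estimate. The bookkeeping mirrors Lemma~\ref{lemma:l_m/l_n} and Lemma~\ref{lemma:Delta-ratio}, so the tools are all in place, but keeping the normalizations consistent between the $\Delta^{(n)}$ and $\Delta^{(n-1)}$ families while proving convergence is where the care is needed.
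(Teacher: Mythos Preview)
Your approach is correct and is a genuinely different (though equally standard) route from the paper's. The paper builds $h$ along a single dense orbit via the additive cocycle $\gamma(\xi_0)=0$, $\gamma(\xi_{i+1})=\gamma(\xi_i)-\log T'(\xi_i)$; the same summability $\sum_n k_{n+1}\eps_n<\infty$ shows $\gamma$ is uniformly continuous on the orbit, hence extends to $\unitcircle$, and then $h=e^{\gamma}/\int e^{\gamma}$. The advantage of the paper's route is that the homological equation $h(T\xi)T'(\xi)=h(\xi)$ is automatic from the cocycle definition, so no separate invariance argument is needed; your partition-approximant route gives a more geometric picture (ratios of rigid to dynamical lengths) and makes continuity of $h$ immediate from uniform convergence, at the cost of an extra line to pass $T$-invariance to the limit. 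One small correction of indexing: in the refinement $\mathcal P_n\to\mathcal P_{n+1}$ it is the long segments $\Delta^{(n-1)}_i$ that get subdivided into $k_{n+1}$ pieces of type $\Delta^{(n)}$ plus one of type $\Delta^{(n+1)}$ (the $\Delta^{(n)}_i$'s are unchanged), so the per-step bound is $\|\log h_{n+1}-\log h_n\|_0=\Obig(k_{n+1}\eps_n)$ rather than $\Obig(k_{n+2}\eps_{n+1})$; this is harmless for the conclusion.
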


\begin{proof}
Consider arbitrary trajectory $\Xi=\{\xi_i,i\in\Z\}$, $\xi_i=T^i\xi_0$, and define a function $\gamma:\Xi\to\R$
by use of the following recurrent relation:
$$
\gamma(\xi_0)=0;\quad\gamma(\xi_{i+1})=\gamma(\xi_{i})-\log T'(\xi_{i}),\quad i\in\Z
$$
As soon as $\xi_j\in\Delta_i^{(n)}$, $j>i$, we have
$$
\gamma(\xi_i)-\gamma(\xi_j)=\Obig\left(\eps_n+\sum_{s=n+1}^{+\infty}k_{s+1}\eps_s\right)=\Obig(\lambda_0^n)\to0,\quad
n\to+\infty
$$
due to combinatorics of a trajectory and Proposition~\ref{prop:Denjoy}. It follows that $\gamma\in C(\Xi)$. Since
$\Xi$ is dense in $\unitcircle$, the function $\gamma$ can be extended continuously onto $\unitcircle$. The
function $h(\xi)=e^{\gamma(\xi)}\left(\int_{\unitcircle} e^{\gamma(\eta)}d\eta\right)^{-1}$ is continuous and
positive on $\unitcircle$, satisfies the {\em homological equation}
\begin{equation}\label{eq:homol}
h(T\xi)=\frac{1}{T'(\xi)}h(\xi),\quad\xi\in\unitcircle,
\end{equation}
and $\int_\unitcircle h(\xi)d\xi=1$. It is easy to check that the $C^1$-smooth diffeomorphism
$$
\phi(\xi)=\int_{\xi_0}^\xi h(\eta)d\eta,\quad\xi\in\unitcircle
$$
conjugates $T$ to $R_\rho$.
\end{proof}

\subsection{$C^{\alpha-\delta}$-smoothness of $h$}

A straightforward corollary of Proposition~\ref{prop:C^1} is that $l_n\sim\Delta_n$.

\begin{lemma}\label{lemma:eps_n}
$\eps_n=\Obig(\Delta_n^{\frac{\alpha}{1+\delta}})$.
\end{lemma}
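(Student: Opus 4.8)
The plan is to rewrite $\eps_n$ in terms of the quantities $\Delta_m$ using the corollary $l_n\sim\Delta_n$ that precedes the lemma, and then to attack the resulting sum termwise by combining the Diophantine property \eqref{eq:Dioph} with the exponential decay of $\Delta_n$. Set $\beta=\frac{\alpha}{1+\delta}$, and observe at the outset that $\beta<1$, since $\alpha\le1<1+\delta$. Replacing each $l_m$ by the comparable $\Delta_m$, the definition of $\eps_n$ becomes
$$
\eps_n=\Obig\left(\sum_{k=0}^{n}\frac{\Delta_n}{\Delta_{n-k}}\,\Delta_{n-k-1}^\alpha\right),
$$
where the $k=0$ summand is $\Delta_{n-1}^\alpha$ and the $k=n$ summand is $\Delta_n/\Delta_0$ (using $\Delta_{-1}=1$). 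Thus it suffices to bound this sum by $\Obig(\Delta_n^\beta)$.

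The first ingredient is the Diophantine inequality \eqref{eq:Dioph}, which in the form $\Delta_{m-1}\le C\,\Delta_m^{1/(1+\delta)}$ gives, with $m=n-k$,
$$
\Delta_{n-k-1}^\alpha\le C\,\Delta_{n-k}^{\alpha/(1+\delta)}=C\,\Delta_{n-k}^\beta.
$$
Substituting, the $k$th summand is bounded by $C\,\Delta_n\,\Delta_{n-k}^{\beta-1}$; the finitely many boundary terms (where \eqref{eq:Dioph} is invoked only for fixed small indices, including $\Delta_{-1}=1$) merely enlarge the implicit constant. The second ingredient is the exponential decay of the $\Delta_n$: from Lemma~\ref{lemma:l-lambda} together with $l_n\sim\Delta_n$ we have $\Delta_n/\Delta_{n-k}=\Obig(\lambda^k)$, whence $\Delta_{n-k}\ge c\,\lambda^{-k}\Delta_n$ for some $c>0$. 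Since $\beta-1<0$, the map $x\mapsto x^{\beta-1}$ is decreasing, so
$$
\Delta_{n-k}^{\beta-1}\le\left(c\,\lambda^{-k}\Delta_n\right)^{\beta-1}=C\,\lambda^{k(1-\beta)}\,\Delta_n^{\beta-1}.
$$

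Combining the two estimates, the $k$th summand is $\Obig\!\left(\lambda^{k(1-\beta)}\Delta_n^\beta\right)$, and summing the geometric series $\sum_{k\ge0}\lambda^{k(1-\beta)}<\infty$ (convergent because $1-\beta>0$ and $\lambda\in(0,1)$) yields $\eps_n=\Obig(\Delta_n^\beta)$, as required. I expect the main obstacle to be exactly this interplay: the Diophantine estimate on its own only bounds each of the $n+1$ summands by $\Obig(\Delta_n^\beta)$, which would give the useless $\Obig(n\,\Delta_n^\beta)$; it is the exponential decay of $\Delta_n$ that supplies the geometric factor $\lambda^{k(1-\beta)}$ and makes the summation over $k$ uniform in $n$. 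Checking that the boundary terms and the endpoint conventions $\Delta_{-1}=1$, $l_{-1}=1$ are correctly absorbed into constants is the only other point requiring care.
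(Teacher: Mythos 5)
Your proposal is correct and follows essentially the same route as the paper's proof: rewrite $\eps_n$ via $l_m\sim\Delta_m$, apply the Diophantine bound $\Delta_{n-k-1}^\alpha=\Obig(\Delta_{n-k}^{\alpha/(1+\delta)})$ termwise, factor out $\Delta_n^{\alpha/(1+\delta)}$, and control the remaining factor $(\Delta_n/\Delta_{n-k})^{1-\beta}$ by the exponential decay $\Delta_n/\Delta_{n-k}=\Obig(\lambda^k)$ to sum a geometric series. The only difference is cosmetic bookkeeping of the boundary terms, which you handle correctly.
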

\begin{proof}
The Diophantine condition $\Delta_{n-1}^{1+\delta}=\Obig(\Delta_n)$ implies that
\begin{multline*}
\eps_n=\Obig\left(\sum_{m=0}^{n}\frac{\Delta_n}{\Delta_{n-m}}\Delta_{n-m-1}^\alpha\right)=
\Obig\left(\Delta_n\sum_{m=0}^{n}\Delta_{n-m}^{\frac{\alpha}{1+\delta}-1}\right)=\\
\Obig\left(\Delta_n^{\frac{\alpha}{1+\delta}}
\sum_{m=0}^{n}\left(\frac{\Delta_{n}}{\Delta_{n-m}}\right)^{\frac{1-\alpha+\delta}{1+\delta}}\right)=
\Obig(\Delta_n^{\frac{\alpha}{1+\delta}}),
\end{multline*}
since $\frac{\Delta_{n}}{\Delta_{n-m}}=\Obig(\lambda^m)$ is exponentially small in $m$.
\end{proof}

\begin{remark}
Since $k_{n+1}\Delta_n<\Delta_{n-1}=\Obig(\Delta_n^{\frac{1}{1+\delta}})$, Lemma~\ref{lemma:eps_n} implies that
$$
k_{n+1}\eps_n=\Obig(\Delta_{n}^{\frac{\alpha-\delta}{1+\delta}})=\Obig(\Delta_{n-1}^{\alpha-\delta})
$$
This improves Proposition~\ref{prop:k_n+1eps_n} a posteriori.
\end{remark}

\begin{proposition}
$h\in C^{\alpha-\delta}(\unitcircle)$.
\end{proposition}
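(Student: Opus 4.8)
The plan is to show that the density $h$, which we have already constructed as $h(\xi)=e^{\gamma(\xi)}/\int_\unitcircle e^{\gamma(\eta)}\,d\eta$, is H\"older continuous with exponent $\alpha-\delta$. Since $h$ is a positive continuous function bounded away from zero and infinity, and since the exponential is locally Lipschitz, it suffices to prove that $\gamma$ itself is $C^{\alpha-\delta}$, i.e. that $|\gamma(\xi)-\gamma(\eta)|=\Obig(|\xi-\eta|^{\alpha-\delta})$ for all $\xi,\eta\in\unitcircle$. Because $\gamma$ was defined on the dense orbit $\Xi$ and extended by continuity, the estimate only needs to be established for pairs of orbit points $\xi_i,\xi_j$, and then passes to the closure.

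The key quantitative input is the combinatorial/dynamical control already assembled in the paper. First I would fix two nearby points and locate the generation $n$ at which they sit: given $\xi,\eta$ with small separation, choose $n$ so that $|\xi-\eta|$ is comparable to $l_n\sim\Delta_n$ (using the corollary $l_n\sim\Delta_n$ noted just before Lemma~\ref{lemma:eps_n}). The combinatorics of dynamical partitions guarantees that two such points are separated by a controlled number of fundamental segments of generation $n$; the bound proved in the course of Proposition~\ref{prop:C^1} gives
\begin{equation*}
|\gamma(\xi)-\gamma(\eta)|=\Obig\left(\eps_n+\sum_{s=n+1}^{+\infty}k_{s+1}\eps_s\right).
\end{equation*}
The point of this subsection is to estimate the right-hand side not by $\lambda_0^n$ but in terms of $\Delta_n$, so that it matches a power of $|\xi-\eta|$.

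Here the two estimates of this subsection do the work. Lemma~\ref{lemma:eps_n} gives $\eps_n=\Obig(\Delta_n^{\alpha/(1+\delta)})$, and the Remark following it upgrades this to $k_{n+1}\eps_n=\Obig(\Delta_{n-1}^{\alpha-\delta})$, equivalently $k_{s+1}\eps_s=\Obig(\Delta_s^{(\alpha-\delta)/(1+\delta)})$. Since the Diophantine condition forces the $\Delta_s$ to decay at least geometrically, the tail sum $\sum_{s=n+1}^\infty k_{s+1}\eps_s$ is dominated by its first term and is $\Obig(\Delta_n^{(\alpha-\delta)/(1+\delta)})$; the leading term $\eps_n=\Obig(\Delta_n^{\alpha/(1+\delta)})$ is of strictly higher order and therefore negligible. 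Thus $|\gamma(\xi)-\gamma(\eta)|=\Obig(\Delta_n^{(\alpha-\delta)/(1+\delta)})$. Finally, using $l_n\sim\Delta_n\sim|\xi-\eta|$ one checks that the exponent converts correctly: $\Delta_n^{(\alpha-\delta)/(1+\delta)}$ should be comparable to $|\xi-\eta|^{\alpha-\delta}$ after accounting for the relation between $\Delta_n$ and the actual gap.

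The main obstacle I anticipate is precisely this last bookkeeping of exponents, which is slightly delicate. The Diophantine loss is built into $\eps_n$ through the factor $\Delta_n^{1/(1+\delta)}$ rather than $\Delta_n$, so one must be careful not to conflate $\Delta_n$ with the genuine Euclidean distance $|\xi-\eta|$; the two differ by the Diophantine exponent, and it is exactly this discrepancy that transmutes the naive exponent $(\alpha-\delta)/(1+\delta)$ measured against $\Delta_n$ into the claimed H\"older exponent $\alpha-\delta$ measured against $|\xi-\eta|$. I would therefore be most careful in the step where the gap $|\xi-\eta|$ is related to $\Delta_n$: since the relevant points are separated by several generation-$n$ segments whose total length is $\Obig(\Delta_{n-1})$ rather than $\Delta_n$, one may need to compare $|\xi-\eta|$ with $\Delta_{n-1}\sim\Delta_n^{1/(1+\delta)}$, and it is this comparison that makes the exponents line up to give exactly $C^{\alpha-\delta}$ rather than anything weaker.
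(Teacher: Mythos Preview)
Your outline follows the paper's strategy, but there is a genuine gap at the core of the argument: the bound you quote from the proof of Proposition~\ref{prop:C^1},
\[
|\gamma(\xi)-\gamma(\eta)|=\Obig\Bigl(\eps_n+\sum_{s\ge n+1}k_{s+1}\eps_s\Bigr),
\]
is only valid when $\xi$ and $\eta$ lie in a \emph{single} fundamental segment of level $n$. You cannot in general choose $n$ so that $|\xi-\eta|$ is comparable to $\Delta_n$ with a bounded constant: when $k_{n+1}$ is large there is a whole range of scales between $\Delta_n$ and $\Delta_{n-1}$ that is not comparable to any $\Delta_m$. For two points with $\Delta_n\le|\phi(\xi)-\phi(\eta)|<\Delta_{n-1}$ one must record the integer $k$ with $k\Delta_n\le|\phi(\xi)-\phi(\eta)|<(k+1)\Delta_n$, $1\le k\le k_{n+1}$, and the correct combinatorial bound is
\[
|\gamma(\xi)-\gamma(\eta)|=\Obig\Bigl(k\eps_n+\sum_{s\ge n+1}k_{s+1}\eps_s\Bigr).
\]
The missing step, and the whole point of the computation, is then to show $k\eps_n=\Obig\bigl((k\Delta_n)^{\alpha-\delta}\bigr)$. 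The paper does this by rewriting $k\eps_n=\Obig\bigl((k\Delta_n)^{\alpha-\delta}(k\Delta_n/\Delta_{n-1})^{1-\alpha+\delta}\bigr)$ and using $k\Delta_n\le\Delta_{n-1}$; this is exactly where the interpolation between levels happens and where the Diophantine exponent enters. Your final paragraph gestures toward this difficulty but does not resolve it: comparing $\Delta_n^{(\alpha-\delta)/(1+\delta)}$ with $|\xi-\eta|^{\alpha-\delta}$ only works at the extreme $|\xi-\eta|\sim\Delta_{n-1}$, not for intermediate $k$.

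A secondary point: your tail estimate $\sum_{s\ge n+1}k_{s+1}\eps_s=\Obig(\Delta_n^{(\alpha-\delta)/(1+\delta)})$ is weaker than what you need. Using the other form $k_{s+1}\eps_s=\Obig(\Delta_{s-1}^{\alpha-\delta})$ from the Remark gives the sharper $\Obig(\Delta_n^{\alpha-\delta})$, which is what actually matches $|\xi-\eta|^{\alpha-\delta}$ when $|\xi-\eta|\sim\Delta_n$.
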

\begin{proof}
Consider two points $\xi_0,\xi\in\unitcircle$ and such $n$ that $\Delta_n\le|\phi(\xi)-\phi(\xi_0)|<\Delta_{n-1}$.
Let $k$ be the greatest positive integer such that $|\phi(\xi)-\phi(\xi_0)|\ge k\Delta_n$. (It follows that $1\le
k\le k_{n+1}$.) Due to the combinatorics of trajectories, continuity of $h$ and the homologic equation
(\ref{eq:homol}), we have
$$
\left|\log h(\xi)-\log h(\xi_0)\right|=\Obig\left(k\eps_n+\sum_{s=n+1}^{+\infty}k_{s+1}\eps_s\right)
$$
The right-hand side here is bounded and so is $h$, hence the same estimate holds for $|h(\xi)-h(\xi_0)|$. By
Lemma~\ref{lemma:eps_n}, we have
$$
k\eps_n=\Obig\left(k^{\alpha-\delta}\Delta_n^{\alpha-\delta}
\left(\frac{k\Delta_n}{\Delta_{n-1}}\right)^{1-\alpha+\delta}\right)=
\Obig((k\Delta_n)^{\alpha-\delta})
$$
and
$$
\sum_{s=n+1}^{+\infty}k_{s+1}\eps_s=\Obig\left(\sum_{s=n+1}^{+\infty}\Delta_{s-1}^{\alpha-\delta}\right)=
\Obig(\Delta_{n}^{\alpha-\delta}),
$$
so $|h(\xi)-h(\xi_0)|=\Obig((k\Delta_n)^{\alpha-\delta})=\Obig(|\phi(\xi)-\phi(\xi_0)|^{\alpha-\delta})=
\Obig(|\xi-\xi_0|^{\alpha-\delta})$.
\end{proof}


\begin{center}
\bf References
\end{center}

\begin{enumerate}

\bibitem{SK} Ya.~G.~Sinai and K.~M.~Khanin, Smoothness of conjugacies of diffeomorphisms of the circle with rotations,
{\em Uspekhi Mat.~Nauk} {\bf 44}:1 (1989), 57–-82 (in Russian); English transl., {\em Russian Math.~Surveys} {\bf
44}:1 (1989), 69–-99.

\bibitem{H} M.-R.~Herman. Sur la conjugaison differentiable des diffeomorphismes du cercle a des rotations,
{\em I.\ H.\ E.\ S.\ Publ.\ Math.}, {\bf 49}: 5--233, 1979.

\bibitem{Y} J.-C.~Yoccoz. Conjugaison differentiable des diffeomorphismes du cercle dont le nombre de rotation
verifie une condition diophantienne. {\em Ann.\ Sci.\ Ecole Norm.\ Sup.\ (4)}, {\bf 17} (3): 333--359, 1984.

\bibitem{KS} K.~M.~Khanin and Ya.~G.~Sinai. A new proof of M.~Herman's theorem, {\em Comm.\ Math.\ Phys.}, {\bf 112}
(1): 89--101, 1987.

\bibitem{KO1} Y.~Katznelson and D.~Ornstein. The differentiability of the conjugation of certain diffeomorphisms
of the circle, {\em Ergodic Theory Dynam. Systems}, {\bf 9}:4 (1989), 643--680.

\bibitem{KO2} Y.~Katznelson and D.~Ornstein. The absolute continuity of the conjugation of certain diffeomorphisms
of the circle, {\em Ergodic Theory Dynam. Systems}, {\bf 9} (4): 681--690, 1989.

\bibitem{S-book} Ya.~G.~Sinai. Topics in ergodic theory. Princeton Mathematical Series, 44.
Princeton Univ. Press, Princeton, NJ, 1994.

\end{enumerate}

\end{document}